\theoremstyle{plain}
    \newtheorem{thm}{Theorem}[section]
    \newtheorem{lemma}[thm]{Lemma}
    \newtheorem{theorem}[thm]{Theorem}
\theoremstyle{definition}
\theoremstyle{remark}
\newcommand{\authorfootnotes}{\renewcommand\thefootnote{\@fnsymbol\c@footnote}}
 \title[New Q-Newton's method meets Backtracking line search]{New Q-Newton's method meets Backtracking line search: good convergence guarantee, saddle points avoidance, quadratic rate of convergence, and easy implementation}
 \author{Tuyen Trung Truong}
   \address{Department of Mathematics, University of Oslo, Blindern 0851 Oslo, Norway}
  \email{tuyentt@math.uio.no}
    \date{\today}
    \keywords{Backtracking line search, Convergence guarantee, Newton's method, Rate of convergence, Saddle points}
   \subjclass[2010]{}
\begin{document}
\maketitle

\begin{abstract}
In a recent joint work, the author has developed a modification of Newton's method, named New Q-Newton's method, which can avoid saddle points and has quadratic rate of convergence. While good theoretical convergence guarantee has not been established for this method, experiments on small scale problems show that the method works very competitively against other well known modifications of Newton's method such as Adaptive Cubic Regularization and BFGS, as well as first order methods such as Unbounded Two-way Backtracking Gradient Descent.

In this paper, we resolve the convergence guarantee issue by proposing a modification of New Q-Newton's method, named New Q-Newton's method Backtracking, which incorporates a more sophisticated use of hyperparameters and  a Backtracking line search. This new method has very good theoretical guarantees, which for a {\bf Morse function} yields the following (which is unknown for New Q-Newton's method): 

{\bf Theorem.} Let $f:\mathbb{R}^m\rightarrow \mathbb{R}$ be a Morse function, that is all its critical points have invertible Hessian. Then for a sequence $\{x_n\}$ constructed by New Q-Newton's method Backtracking from a random initial point $x_0$, we have the following two alternatives: 

i) $\lim _{n\rightarrow\infty}||x_n||=\infty$,

or 

ii) $\{x_n\}$ converges to a point $x_{\infty}$ which is a {\bf local minimum} of $f$, and the rate of convergence is {\bf quadratic}. 

Moreover, if $f$ has compact sublevels, then only case ii) happens. 

As far as we know, for Morse functions, this is the best theoretical guarantee for iterative optimization algorithms so far in the literature. A similar result was proven by the author for a modification of Backtracking Gradient Descent, for which the rate of convergence is only linear. 

New Q-Newton's method Backtracking can also be defined on Riemannian manifolds, and it can easily be implemented in Python. We have tested in experiments on small scale, with some further simplified versions of New Q-Newton's method Backtracking, and found that the new method significantly improve (either in computing resource, running time or convergence to a better point) the performance of New Q-Newton's method, and is competitive against other iterative methods (both first and second orders). Hence, this new method can be a good candidate for a second order iterative method to use in large scale optimisation such as in Deep Neural Networks.

\end{abstract}

\section{Introduction} Let $f:\mathbb{R}^m\rightarrow \mathbb{R}$ be a $C^2$ function, with gradient $\nabla f(x)$ and Hessian $\nabla ^2f(x)$.   Newton's method $x_{n+1}=x_n-(\nabla ^2f(x_n))^{-1}\nabla f(x_n)$ (if the Hessian is {\bf invertible}) is a very popular iterative optimization method. It seems that every month there is at least one paper about this subject appears on arXiv. One attractive feature of this method is that if it {\bf converges} then it usually converges very fast, with the rate of convergence being quadratic, which is generally faster than that of Gradient descent (GD) methods.  We recall that if $\{x_n\}\subset \mathbb{R}^m$  converges to $x_{\infty}$, and so that $||x_{n+1}-x_{\infty}||=O(||x_n-x_{\infty}||^{\epsilon})$, then $\epsilon$ is the rate of convergence of the given sequence. If $\epsilon =1$ then we have linear rate of convergence, while if $\epsilon=2$ then we have quadratic rate of convergence.  

However, there is no guarantee that Newton's method will converge, and it is problematic near points where the Hessian is not invertible. Moreover, it cannot avoid saddle points. Recall that a {\bf saddle point} is a point $x^*$ which is a non-degenerate critical point of $f$ (that is $\nabla f(x^*)=0$ and $\nabla ^2f(x^*)$ is invertible) so that the Hessian has at least one {\bf negative eigenvalue}. Saddle points are problematic in large scale optimization (such as those appearing in Deep Neural Networks, for which the dimensions could easily be millions or billions), see \cite{bray-dean, dauphin-pascanu-gulcehre-cho-ganguli-bengjo}. 

One way to improve the performance of Newton's method is to use a (Backtracking) line search, if the step direction of Newton's method $w_n=(\nabla ^2f(x_n))^{-1}\nabla f(x_n)$ is {\bf descent}, that is $<w_n,\nabla f(x_n)> >0$ (here $<.,.>$ is the usual inner product on $\mathbb{R}^m$). In this case one can choose a step size $\gamma >0$ for which a weak descent property $f(x_n-\gamma w_n)\leq  f(x_n)$ is satisfied, or the stronger Armijo's condition \cite{armijo} $f(x_n-\gamma w_n)-f(x_n)\leq -\gamma <w_n,\nabla f(x_n)>/2$ is satisfied. However, except if the cost function $f$ is convex, there is no guarantee that the step direction of Newton's method is a descent direction, and hence this idea cannot be directly applied. In current literature, there is one modification of Newton's method, namely Adaptive Cubic Regularization,  which has a descent property \cite{nesterov-polyak, cartis-etal}. While this method has quite good theoretical guarantees (though, under some quite restrictive assumptions and is not known if can avoid saddle points), its description is quite complicated and it depends on an auxilliary optimization subproblem at each step in the iteration process. This hinders an effective implementation of the method for actual computations. 

An overview of theoretical properties and a comparison of experimental performance of Adaptive Cubic Regularization (Python implementation in the GitHub link \cite{ARCGitHub}), against  Newton's method and some  modifications such as BFGS, Inertial Newton's method \cite{bolte-etal} and New Q-Newton's method (in the author's recent joint work \cite{truong-etal}), as well as a first order method Unbounded Two-way Backtracking GD \cite{truong-nguyen1},  is available in \cite{truong-etal}.  We refer interested readers to  see \cite{truong-etal} for the mentioned theoretical overview.  Some experimental results (on small scale problems) there will be reported again in the below in comparison with the new method to be developed in the current paper. 

The author's joint paper \cite{truong-etal} defined a new modification of Newton's method which is easy to implement, while can avoid saddle points and as fast as Newton's method. It is recalled in Algorithm \ref{table:alg}. Here we explain notations used in the description. Let $A:\mathbb{R}^m\rightarrow \mathbb{R}^m$ be an invertible {\bf symmetric} square matrix. In particular, it is diagonalisable.  Let $V^{+}$ be the vector space generated by eigenvectors of positive eigenvalues of $A$, and $V^{-}$ the vector space generated by eigenvectors of negative eigenvalues of $A$. Then $pr_{A,+}$ is the orthogonal projection from $\mathbb{R}^m$ to $V^+$, and  $pr_{A,-}$ is the orthogonal projection from $\mathbb{R}^m$ to $V^-$. As usual, $Id$ means the $m\times m$ identity matrix.

\medskip
{\color{blue}
 \begin{algorithm}[H]
\SetAlgoLined
\KwResult{Find a critical point of $f:\mathbb{R}^m\rightarrow \mathbb{R}$}
Given: $\{\delta_0,\delta_1,\ldots, \delta_{m}\}\subset \mathbb{R}$\ (chosen {\bf randomly}) and $\alpha >0$;
Initialization: $x_0\in \mathbb{R}^m$\;
 \For{$k=0,1,2\ldots$}{ 
    $j=0$\\
    \If{$\|\nabla f(x_k)\|\neq 0$}{
   \While{$\det(\nabla^2f(x_k)+\delta_j \|\nabla f(x_k)\|^{1+\alpha}Id)=0$}{$j=j+1$}}

$A_k:=\nabla^2f(x_k)+\delta_j \|\nabla f(x_k)\|^{1+\alpha}Id$\\
$v_k:=A_k^{-1}\nabla f(x_k)=pr_{A_k,+}(v_k)+pr_{A_k,-}(v_k)$\\
$w_k:=pr_{A_k,+}(v_k)-pr_{A_k,-}(v_k)$\\
$x_{k+1}:=x_k-w_k$
   }
  \caption{New Q-Newton's method} \label{table:alg}
\end{algorithm}
}
\medskip
 
 Experimentally, on small scale problems, New Q-Newton's method works quite competitive against the methods mentioned above, see \cite{truong-etal}.  However, while it can avoid saddle points, it does not have a descent property, and an open question in \cite{truong-etal} is whether it has good convergence guarantees. The current paper shows that this is the case, after one incorporates Backtracking line search into it. This is based on the observation that the step direction w constructed by New Q-Newton's method indeed is a {\bf descent direction}, see Lemma \ref{Lemma1}. We name the new method New Q-Newton's method, see Algorithm  \ref{table:alg0} below. Its use of hyperparameters is more sophisticated than that of New Q-Newton's method, and we need some notations.  For a square matrix $A$, we define: 
  
  $sp(A)=$ the maximum among $|\lambda |$'s, where $\lambda  $ runs in the set of eigenvalues of $A$, this is usually called the spectral radius in the Linear Algebra literature;
  
  and 
  
  $minsp(A)=$ the minimum among $|\lambda |$'s, where $\lambda  $ runs in the set of eigenvalues of $A$, this number is non-zero precisely when $A$ is invertible.

In Algorithm  \ref{table:alg0} below, note that by Lemma \ref{Lemma0}, the first While loop will terminate.  Also, the second  While loop terminates after a finite number of steps, since $w_k$ is a descent direction, see Lemma \ref{Lemma1}.   
 
\medskip
{\color{blue}
 \begin{algorithm}[H]
\SetAlgoLined
\KwResult{Find a critical point of $f:\mathbb{R}^m\rightarrow \mathbb{R}$}
Given: $\{\delta_0,\delta_1,\ldots, \delta_{m}\} \subset \mathbb{R}$\ (chosen {\bf randomly}, and $\alpha >0$;
Initialization: $x_0\in \mathbb{R}^m$\;
$\kappa:=\frac{1}{2}\min _{i\not=j}|\delta _i-\delta _j|$;\\
 \For{$k=0,1,2\ldots$}{ 
    $j=0$\\
  \If{$\|\nabla f(x_k)\|\neq 0$}{
   \While{$minsp(\nabla^2f(x_k)+\delta_j \|\nabla f(x_k)\|^{1+\alpha}Id)<\kappa  \|\nabla f(x_k)\|^{1+\alpha}$}{$j=j+1$}}
  
 $A_k:=\nabla^2f(x_k)+\delta_j \|\nabla f(x_k)\|^{1+\alpha}Id$\\
$v_k:=A_k^{-1}\nabla f(x_k)=pr_{A_k,+}(v_k)+pr_{A_k,-}(v_k)$\\
$w_k:=pr_{A_k,+}(v_k)-pr_{A_k,-}(v_k)$\\
$\widehat{w_k}:=w_k/\max \{1,||w_k||\}$\\
$\gamma :=1$\\
 \If{$\|\nabla f(x_k)\|\neq 0$}{
   \While{$f(x_k-\gamma \widehat{w_k})-f(x_k)>-\gamma <\widehat{w_k},\nabla f(x_k)>/2$}{$\gamma =\gamma /2$}}

$x_{k+1}:=x_k-\gamma \widehat{w_k}$
   }
  \caption{New Q-Newton's method Backtracking} \label{table:alg0}
\end{algorithm}
}
\medskip
 
There are two main differences between New Q-Newton's method  and New Q-Newton's method Backtracking. First, in the former we only need $\det(A_k)\not= 0$, while in the latter we need the eigenvalues of $A_k$ to be  "sufficiently large". Second, the former has no line search component, while the latter has. Note that the $\widehat{w_k}$ in Algorithm \ref{table:alg0} satisfies $||\widehat{w_k}||\leq 1$, and if $||\widehat{w_k}||<1$ then $\widehat{w_k}=w_k$. On the other hand, if one wants to keep closer to New Q-Newton's method, one can apply line search directly to $w_k$, and obtain a variant which will be named New Q-Newton's method Backtracking S, see Section \ref{Section2} for its description and theoretical guarantees. 
 
While New Q-Newton's method Backtracking is descriptive enough to be straight forwardly implemented in Python, in experiments we opt to some even simpler versions of it. Among many other things we will replace the first While loop of its by the first While loop of New Q-Newton's method.  We will use the  following two versions: New Q-Newton's method Backtracking V1 for which a weak descent property $f(x_n-\gamma w_n)\leq f(x_n)$ is incorporated (experiments show that it works very well, attains a similar point as V2 while being simpler than V2, and in certain cases can be much faster than V2), and the other is New Q-Newton's method Backtracking V2 for which Armijo's line search is incorporated. They are described in Algorithm \ref{table:alg1} and Algorithm \ref{table:alg2}. An explanation of why they should behave well, as observed in experiments, will be given in Section \ref{Section2}.  The S' version of V1 will be named V3, and that of V2 will be named V4, and they will also be detailed in Section \ref{Section2}, see Algorithms  \ref{table:alg3} and  \ref{table:alg4}.  

In the remaining of this section, we present the good theoretical guarantees of New Q-Newton's method Backtracking.  We recall that if $\{x_n\}\subset \mathbb{R}^m$ has a subsequence $\{x_{n_k}\}$ converging to a point $x_{\infty}$, then $x_{\infty}$ is a {\bf cluster point} of $\{x_n\}$. By definition, the sequence $\{x_n\}$ is convergent if it is bounded and has exactly one cluster point. 

\begin{theorem}
Let $f:\mathbb{R}^m\rightarrow \mathbb{R}$ be a $C^3$ function. Let $\{x_n\}$ be a sequence constructed by the New Q-Newton's method Backtracking. 

0) (Descent property) $f(x_{n+1})\leq f(x_n)$ for all n. 

1) If $x_{\infty}$ is a {\bf cluster point} of $\{x_n\}$, then $\nabla f(x_{\infty})=0$. That is, $x_{\infty}$ is a {\bf critical point} of $f$.

2) There is a set $\mathcal{A}\subset \mathbb{R}^m$ of Lebesgue measure $0$, so that if $x_0\notin \mathcal{A}$, and $x_n$ converges to $x_{\infty}$, then $x_{\infty}$ cannot be  a {\bf saddle point} of $f$. 

3) If $x_0\notin \mathcal{A}$ (as defined in part 2) and $\nabla ^2f(x_{\infty})$ is invertible, then $x_{\infty}$ is a {\bf local minimum} and the rate of convergence is {\bf quadratic}. 

4) More generally, if $\nabla ^2f(x_{\infty})$ is invertible (but no assumption on the randomness of $x_0$), then the rate of convergence is at least linear. 

5) If $x_{\infty}'$ is a non-degenerate local minimum of $f$, then for initial points $x_0'$ close enough to $x_{\infty}'$, the sequence $\{x_n'\}$  constructed by New Q-Newton's method will converge to $x_{\infty}'$. 

\label{Theorem1}\end{theorem}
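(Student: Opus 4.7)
\medskip

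The plan is to treat the six parts in the order of increasing dynamical-systems content. Part~0 is immediate: the inner while loop terminates by construction, and its exit criterion $f(x_k-\gamma\widehat{w_k})-f(x_k)\leq -\gamma\langle\widehat{w_k},\nabla f(x_k)\rangle/2$, combined with $\langle \widehat{w_k},\nabla f(x_k)\rangle>0$ from Lemma~\ref{Lemma1}, forces $f(x_{k+1})\leq f(x_k)$. For Part~1, I would run a standard Zoutendijk/Armijo argument: assuming for contradiction that a cluster point $x_\infty$ satisfies $\nabla f(x_\infty)\neq 0$, the exit criterion of the first while loop guarantees $\operatorname{minsp}(A_k)\geq\kappa\|\nabla f(x_k)\|^{1+\alpha}$, while $\operatorname{sp}(A_k)$ is bounded above by continuity of $\nabla^2 f$; so $A_k$ has uniformly bounded condition number in a neighborhood of $x_\infty$. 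Writing $w_k$ in the eigenbasis of $A_k$ one checks that $\|\widehat{w_k}\|$ is bounded above and $\langle\widehat{w_k},\nabla f(x_k)\rangle$ is bounded below by a positive constant along a subsequence $x_{n_j}\to x_\infty$. Lipschitz continuity of $\nabla f$ on a compact neighborhood (available in the $C^3$ setting) then yields a uniform lower bound $\gamma^*>0$ on the backtracking step size, so $f(x_{n_j})-f(x_{n_j+1})$ stays bounded away from zero infinitely often, contradicting the fact that the monotone sequence $\{f(x_n)\}$ admits $f(x_\infty)$ as a subsequential limit.

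\medskip

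For Part~2, the tool I have in mind is the Center Stable Manifold theorem, applied to the local iteration map $F(x)=x-\gamma(x)\widehat{w}(x)$. Two things must be verified near any saddle $x^*$: (a) $F$ is a well-defined $C^1$ map there with $\gamma(x)\equiv 1$ and $\widehat{w}(x)\equiv w(x)$, and (b) $DF(x^*)$ has an eigenvalue of modulus exceeding $1$. Point~(b) I would handle by direct linearization: $\nabla f(x)\approx\nabla^2 f(x^*)(x-x^*)$ and $A_k\to\nabla^2 f(x^*)$ give $D\widehat{w}(x^*)=|\nabla^2 f(x^*)|^{-1}\nabla^2 f(x^*)=U\,\mathrm{diag}(\mathrm{sign}(\mu_i))U^T$, so the eigenvalues of $DF(x^*)$ are $1-\mathrm{sign}(\mu_i)\in\{0,2\}$, and any negative Hessian eigenvalue at a saddle supplies the required unstable direction. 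For~(a), the acceptance of $\gamma=1$ can be read off from the Taylor expansion in an eigenbasis of $A_k$: the quadratic part of $f(x-w)-f(x)+\tfrac12\langle w,\nabla f(x)\rangle$ equals $\tfrac12\sum_i(\mathrm{sign}(\lambda_i)-1)g_i^2/|\lambda_i|\leq 0$, and at a saddle the strict slack contributed by the negative eigendirections should dominate the cubic Taylor remainder and the random regularization perturbation $\delta_j\|\nabla f(x_k)\|^{1+\alpha}$. Since the critical set of a Morse $f$ is discrete, the union of the local stable manifolds over all saddles is a countable union of $C^1$ immersed submanifolds of codimension at least one; pulling this back along the iteration produces the claimed null set $\mathcal{A}$.

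\medskip

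Parts~3,~4, and~5 are then harvested from the preceding analysis. In Part~3, Parts~1 and~2 together force $x_\infty$ to be a non-degenerate critical point that is not a saddle, hence a local minimum; in a neighborhood $A_k$ is positive definite, so $w_k=A_k^{-1}\nabla f(x_k)$ is a Levenberg--Marquardt step, and once $\gamma=1$ is eventually accepted the standard Newton quadratic-convergence estimate applies, with $\delta_j\|\nabla f(x_k)\|^{1+\alpha}$ treated as a higher-order perturbation of size $O(\|x_k-x_\infty\|^{1+\alpha})$ of the true Newton direction. Part~4 uses the same local linearization: convergence to a critical point with invertible Hessian forces the tail of $x_k-x_\infty$ to align with the kernel of $DF(x_\infty)-I$, and the contraction rate is at worst linear. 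Part~5 is a classical basin-of-attraction argument: near a non-degenerate local minimum $x_\infty'$ the iteration is a local contraction in an adapted norm, so initial points sufficiently close to $x_\infty'$ stay trapped in the basin and converge. \textbf{The main obstacle} I anticipate is point~(a) of Part~2: justifying that $\gamma=1$ is accepted in a full neighborhood of a saddle point. The quadratic-level Armijo inequality is tight along the stable (positive-eigenvalue) eigendirections, so I have to control carefully the interaction of the cubic Taylor remainder with the random regularization term $\delta_j\|\nabla f\|^{1+\alpha}$ to ensure that the backtracking loop does not shrink $\gamma$, since otherwise $F$ would fail to be $C^1$ and the Center Stable Manifold theorem could not be invoked.
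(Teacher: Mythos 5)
Your proposal follows essentially the same route as the paper on every part: Part~0 is immediate from the Armijo exit test, Part~1 is the standard Zoutendijk/Armijo argument through a uniform positive lower bound on $\gamma$ on compact sets away from critical points (this is exactly Lemma~\ref{Lemma2} in the paper), Part~2 reduces to showing that near a non-degenerate saddle $\gamma(x)\equiv1$ and $\widehat{w}(x)\equiv w(x)$ so that the iteration map coincides with New Q-Newton's method, and Parts~3--5 are then local Newton-contraction estimates. The one genuine difference is that where you sketch the Center Stable Manifold computation yourself (the eigenvalue calculation $1-\mathrm{sign}(\mu_i)\in\{0,2\}$), the paper delegates this entirely to \cite{truong-etal, truongnew}; your self-contained sketch makes the argument more transparent but duplicates what those references already provide. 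You are also right to flag the acceptance of $\gamma=1$ in a full neighborhood of the saddle as the delicate step: the paper's own justification of this is quite compressed. It derives $f(x-w(x))-f(x)\leq -\tfrac12\langle w(x),\nabla f(x)\rangle+o(\|w(x)\|^2)$ from the identity $\tfrac12\langle A(x)w,w\rangle=\tfrac12\bigl(\sum_{\lambda_i>0}a_i^2/|\lambda_i|-\sum_{\lambda_i<0}a_i^2/|\lambda_i|\bigr)\leq\tfrac12\langle w,\nabla f\rangle$, and then concludes $\gamma(x)=1$ using $\langle w,\nabla f\rangle\sim\|w\|^2$, without explicitly controlling the sign of the $o(\|w\|^2)$ remainder against the slack $\sum_{\lambda_i<0}a_i^2/|\lambda_i|$ when the gradient has little component along unstable directions --- precisely the interaction you identify as your main obstacle. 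So you have reproduced the paper's strategy faithfully, including its most delicate spot. Two small notes: you need not invoke Morseness in Part~2 --- saddle points are by definition non-degenerate critical points, hence isolated, hence at most countable, which is what your stable-manifold-union argument actually uses --- and the null set $\mathcal{A}$ must also be pulled back under all iterates of the map, not just its preimage once, a point you should make explicit.
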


There is a parallel result for New Q-Newton's method (see \cite{truong-etal}), where one obtains exactly the same statements for parts 2)-5), while in the corresponding part 1) for New Q-Newton's method one needs to assume in addition that the whole sequence $x_n $ {\bf converges} to $x_{\infty}$. When the cluster set of $\{x_n\}$ constructed by New Q-Newton's method has more than 1 point, theoretically it may happen that some of these cluster points are not {\bf critical} points of $f$, and this seems to be observed for the non-smooth function $f_3(x,y)=10(y-|x|)^2+|1-x|$ in the experiments in Section 3 (interestingly, one of these points is close to the global minimum of the function). The descent property part 0) is not true for New Q-Newton's method. There is also a parallel result for Backtracking GD and its modifications \cite{truong-nguyen1, truong-nguyen2}, with some differences: for Backtracking GD one can show additionally that either $\lim _{n\rightarrow\infty}||x_n||=\infty$ or $\lim _{n\rightarrow\infty}||x_{n+1}-x_n||=0$, while there is no assertion about quadratic rate of convergence in part 3).  

The next application concerns Morse functions. We recall that $f$ is Morse if every critical point of its is non-degenerate. That is, if $\nabla f(x^*)=0$ then $\nabla ^2f(x^*)$ is invertible. By transversality theory, Morse functions are dense in the space of functions. We recall that a sublevel of $f$ is a set of the type $\{x\in \mathbb{R}^m:~f(x)\leq a\}$, for a given $a\in \mathbb{R}$. The function $f$ has compact sublevels if all of its sublevels are compact sets. It is not known if New Q-Newton's method satisfies Theorem \ref{Theorem2}.  

\begin{theorem}
Let $f$ be a $C^3$ function, which is Morse. Let $x_0$ be a random initial point, and let $\{x_n\}$ be a sequence constructed by the New Q-Newton's method Backtracking. Then either  $\lim _{n\rightarrow\infty}||x_n||=\infty$, or $x_n$ converges to a {\bf local minimum}. In the latter case, the rate of convergence is {\bf quadratic}. Moreover, if $f$ has compact sublevels, then only the second alternative occurs. 
\label{Theorem2}\end{theorem}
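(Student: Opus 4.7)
The plan is to reduce Theorem \ref{Theorem2} to Theorem \ref{Theorem1}, using Morseness to supply the invertibility hypothesis at every critical point. The compact-sublevel case is immediate: by the descent property in Theorem \ref{Theorem1}(0), the iterates stay in $\{f\le f(x_0)\}$, which is compact, so $\|x_n\|$ cannot diverge to infinity. This reduces everything to proving the dichotomy in the general case.

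Assume therefore that $\|x_n\|\not\to\infty$, and extract from a bounded subsequence a cluster point $x_\infty$. By Theorem \ref{Theorem1}(1), $x_\infty$ is a critical point of $f$, and Morseness then forces $\nabla^2 f(x_\infty)$ to be invertible. I would first rule out that $x_\infty$ is a local maximum: the monotonicity of $f(x_n)$ together with continuity at $x_\infty$ along the convergent subsequence gives $f(x_n)\ge f(x_\infty)$ for all $n$, whereas non-degenerate local maximality at $x_\infty$ gives $f(x_{n_k})<f(x_\infty)$ for any subsequence approaching $x_\infty$ with $x_{n_k}\ne x_\infty$ (the case $x_{n_k}=x_\infty$ leads the algorithm to stop and trivially converge). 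The two inequalities are incompatible.

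Thus $x_\infty$ is either a non-degenerate local minimum or a saddle point. In the local-minimum case, Theorem \ref{Theorem1}(5) closes the argument: once $k$ is large enough, $x_{n_k}$ lies in the basin of attraction of $x_\infty$, and because the iteration is deterministic, the tail $\{x_n\}_{n\ge n_k}$ is exactly the sequence generated from $x_{n_k}$, so the whole sequence converges to $x_\infty$; Theorem \ref{Theorem1}(3) then upgrades this to a quadratic rate. The main obstacle is the saddle case, since Theorem \ref{Theorem1}(2) is stated under the hypothesis that $\{x_n\}$ already converges to $x_\infty$, not merely has it as a cluster point. I expect to close this gap by revisiting the center--stable manifold argument underlying part (2) at the cluster-point level: for each saddle $p$ of the Morse function $f$, the set of initial points $x_0$ whose iterates have $p$ as a cluster point should be contained in the stable manifold of $p$ under the iteration map, a $C^1$ submanifold of positive codimension and thus of Lebesgue measure zero. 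Since Morse critical points are isolated, there are only countably many saddles, so enlarging $\mathcal{A}$ by this countable union preserves measure zero and rules out saddle cluster points for random $x_0$, completing the proof.
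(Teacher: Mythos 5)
Your reduction to Theorem \ref{Theorem1} is natural, and several pieces are right: the compact-sublevel case, ruling out local maxima (which in the paper's terminology are already counted as saddle points, since any negative Hessian eigenvalue suffices), and the local-minimum case, where you correctly note that Theorem \ref{Theorem1}(5) turns a single nearby iterate into convergence of the entire tail. But the saddle case is where the argument breaks, and you have located the right difficulty while proposing a fix that does not close it. The claim that ``the set of initial points $x_0$ whose iterates have $p$ as a \emph{cluster point} is contained in the stable manifold of $p$'' is not what the center--stable manifold theorem gives: that theorem concerns orbits that \emph{converge} to $p$, and an orbit can a priori visit every neighborhood of $p$ infinitely often while also leaving it infinitely often (say, drifting along the unstable direction, returning later). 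To make your route work you would first have to upgrade ``cluster point'' to ``limit,'' for instance by a Lyapunov-type argument: near a saddle the unstable directions of the iteration map correspond to negative Hessian eigendirections, where $f$ drops strictly below $f(p)$, so an orbit that escapes a fixed neighborhood of $p$ cannot, by monotonicity of $f(x_n)$, later re-enter an arbitrarily small neighborhood of $p$. You do not supply that step, and without it the measure-zero conclusion does not follow.

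The paper takes a different and cleaner route that sidesteps this issue entirely. It first proves that $d(x_n,x_{n+1})\to 0$ in the real-projective metric (the compactification $\mathbb{P}^m$ of $\mathbb{R}^m$), by treating separately subsequences escaping to infinity (where boundedness of $\gamma_n\widehat{w_n}$ and the projective metric's contraction at infinity give the claim) and subsequences converging to a point (where Morseness makes the Hessian at the limit invertible, so $\widehat{w_{n_k}}\to 0$). Combined with Theorem \ref{Theorem1}(1) (every cluster point in $\mathbb{R}^m$ is critical) and the countability/discreteness of the critical set of a Morse function, a connectedness argument in the compact space $\mathbb{P}^m$ (as in \cite{truong-nguyen2}) yields the sharp dichotomy: either $\|x_n\|\to\infty$ or the full sequence converges to a single critical point. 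Once full convergence is in hand, Theorem \ref{Theorem1}(2)(3) apply verbatim, and no new argument about cluster points at saddles is needed. This is the structural lemma your proposal lacks; supplying it (or a descent-based substitute for the saddle case along the lines sketched above) is what would complete the proof.
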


As far as we know, for Morse functions, this is the best theoretical guarantee for iterative optimization algorithms so far in the literature. There is  a parallel result for a {\bf modification} of Backtracking GD \cite{truong, truong4, truongnew} (but it is an open question if the same holds for Backtracking Gradient Descent itself), where however there is no guarantee that the rate of convergence is quadratic. (Note that Standard GD can also avoid saddle points, see  \cite{lee-simchowitz-jordan-recht, panageas-piliouras}, but does not have good convergence guarantee if the function does not have globally Lipschitz continuous gradient. Therefore, Theorem \ref{Theorem2} is not available in general for sequences constructed by Standard GD.)

The remaining of this paper is organised as follows. In Section \ref{Section2} we prove Theorems \ref{Theorem1} and \ref{Theorem2}. There we also introduce some variants, and establish similar theoretical guarantees for them (in particular, the New Q-Newton's method Backtracking S version).  The section after presents some small scale experimental results (some are on non-smooth functions). These show that experimentally New Q-Newton's method Backtracking can significantly improve (either in computing resource, running time or convergence to a better point) New Q-Newton's method, and is competitive against other iterative methods (both first and second orders).  This gives more support that New Q-Newton's method Backtracking can be a good  second order iterative method candidate for large scale optimization such as in Deep Neural Networks. The last section states some  conclusions and generalisations (e.g. to Riemannian manifolds) and some ideas for future research directions.

{\bf Acknowledgments.} The author would like to thank Thu-Hien To for helping with some experiments. The author is partially supported by Young Research Talents grant 300814 from Research Council of Norway.

\section{Theoretical guarantees and Variants}\label{Section2} In this section we prove the main theoretical results in the introduction. Then we introduce some variants of New Q-Newton's method Backtracking, including those to be actually implemented for experiments. 

\subsection{Theoretical guarantees}

We start with auxilliary lemmas.

\begin{lemma} Let $\delta _0,\ldots ,\delta _m$ be distinct real numbers. Define
\begin{eqnarray*}
\kappa :=\frac{1}{2}\min _{i\not=j}|\delta _i-\delta _j|. 
\end{eqnarray*}
Let $A$ be an arbitrary symmetric $m\times m$ matrix, and $\epsilon $ an arbitrary real number. Then there is $j\in \{0,\ldots ,m\}$ so that $minsp(A+\delta _j\epsilon Id)\geq |\kappa \epsilon|$.

\label{Lemma0}\end{lemma}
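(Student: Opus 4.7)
The plan is to diagonalize and reduce the claim to a counting argument. Since $A$ is symmetric, it has real eigenvalues $\lambda_1,\dots,\lambda_m$ (with multiplicity), and for every $j$ the matrix $A+\delta_j\epsilon\,Id$ is simultaneously diagonalizable with eigenvalues $\lambda_i+\delta_j\epsilon$. Hence
$$\operatorname{minsp}(A+\delta_j\epsilon\,Id)=\min_{1\le i\le m}|\lambda_i+\delta_j\epsilon|,$$
and the lemma reduces to showing that for some $j\in\{0,\dots,m\}$ one has $|\lambda_i+\delta_j\epsilon|\ge|\kappa\epsilon|$ for all $i$.

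First I would dispense with the trivial case $\epsilon=0$, where the right-hand side is $0$ and any $j$ works. For $\epsilon\ne0$, the bound we want is equivalent (after dividing by $|\epsilon|$) to $|\lambda_i/\epsilon+\delta_j|\ge\kappa$ for every $i$, so I would work with the shifted quantities $\mu_i:=-\lambda_i/\epsilon$ and ask for a $j$ avoiding every $\mu_i$ by at least $\kappa$.

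The main step is a pigeonhole argument. Suppose, for contradiction, that no such $j$ exists: then for each of the $m+1$ indices $j\in\{0,\dots,m\}$ there is some $i(j)\in\{1,\dots,m\}$ with $|\delta_j-\mu_{i(j)}|<\kappa$. Since there are $m+1$ values of $j$ but only $m$ possible eigenvalues $\mu_i$, two distinct indices $j_1\ne j_2$ share the same $i(j_1)=i(j_2)=i_0$. The triangle inequality then gives
$$|\delta_{j_1}-\delta_{j_2}|\le|\delta_{j_1}-\mu_{i_0}|+|\mu_{i_0}-\delta_{j_2}|<2\kappa,$$
contradicting the definition $\kappa=\tfrac12\min_{i\ne j}|\delta_i-\delta_j|$, which forces $|\delta_{j_1}-\delta_{j_2}|\ge 2\kappa$.

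I do not expect any serious obstacle here: the only care needed is bookkeeping of absolute values (the statement is phrased with $|\kappa\epsilon|$ rather than $\kappa\epsilon$, so I would be mindful that $\epsilon$ and $\kappa$ may be negative or zero but the argument is insensitive to signs once we pass to absolute values). The essence of the proof is just that $m+1$ points cannot all lie within distance $\kappa$ of $m$ targets when the points themselves are pairwise $\ge 2\kappa$ apart.
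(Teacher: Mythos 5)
Your proof is correct and follows essentially the same route as the paper: diagonalize $A$, view the problem as $m+1$ pairwise well-separated points $\delta_j\epsilon$ needing to avoid the $m$ points $-\lambda_i$, and apply the pigeonhole principle. You merely spell out the pigeonhole step (and the trivial $\epsilon=0$ case) a bit more explicitly than the paper does.
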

 \begin{proof}
 Let $\lambda _1,\ldots ,\lambda _m$ be the eigenvalues of $A$. By definition
 \begin{eqnarray*}
 minsp(A+\delta _j\epsilon Id)=\min _{i=1,\ldots ,m}|\lambda _i+\delta _j\epsilon |. 
 \end{eqnarray*}
 The RHS can be interpreted as the minimum of the distances between the points $-\lambda _i$ and $\delta _j\epsilon$. The set $\{\delta _0\epsilon, \delta _1\epsilon, \ldots ,\delta _m\epsilon \}$ consists of $m+1$ distinct points on the real line, the distance between any 2 of them is $\geq 2|\kappa \epsilon|$.  Hence, by the pigeon hole principle, there is at least one $j$ so that the distance from $-\lambda _i$ to $\delta _j\epsilon $ is $\geq |\kappa \epsilon|$ for all $i=1,\ldots ,m$, which is what needed.  
 \end{proof}

The next lemma considers quantities such as $A_k,v_k,w_k,\ldots $ which appear in Algorithm \ref{table:alg0} for New Q-Newton's method Backtracking.  We introduce hence global notations for them for the ease of exposition. Fix a sequence $\delta _0,\delta _1,\ldots ,\delta _j,\delta _{j+1},\ldots $ of real numbers, where $\delta _j\in [h(j,m),h(j,m)+1]$. Fix also a number $\alpha >0$, and a $C^2$ function $f:\mathbb{R}^m\rightarrow \mathbb{R}$. For $x\in \mathbb{R}^m$ so that $\nabla f(x)\not= 0$, we define: 
 
 $\delta (x):=\delta _j$, where $j$ is the smallest index for which $minsp (\nabla ^2f(x)+\delta _j||\nabla f(x)||^{1+\alpha})\geq ||\nabla f(x)||^{(1+\alpha )}$;
 
  $A(x):=\nabla ^2f(x)+\delta (x)||\nabla f(x)||^{1+\alpha}$; 
  
  $v(x):=A(x)^{-1}.\nabla f(x)$;
  
  $w(x):=pr_{A(x),+}v(x)-pr_{A(x).-}v(x)$; 
  
  and
  
  $\widehat{w(x)}:=w(x)/\max \{1,||w(x)||\}$.

  \begin{lemma} Let $f$ be a $C^2$ function and $x\in \mathbb{R}^m$ for which $\nabla f(x)\not= 0$ We have:

1) $||\nabla f(x)||/sp(A(x)) \leq ||w(x)||\leq ||\nabla f(x)||/minsp(A(x))$.

2) (Descent direction)  

\begin{eqnarray*}
&&||\nabla f(x)||^2/sp(A(x))\leq <w(x),\nabla f(x)>\leq ||\nabla f(x)||^2/minsp(A(x)), \\
&& minsp(A(x))||w(x)||^2\leq <w(x),\nabla f(x)>\leq sp(A(x))||w(x)||^2.
\end{eqnarray*}  

3) In particular,  Armijo's condition 
\begin{eqnarray*}
f(x-\gamma w(x))-f(x)\leq -\gamma <w(x),\nabla f(x)>/2, 
\end{eqnarray*}
is satisfied for all $\gamma >0$ small enough.

\label{Lemma1}\end{lemma}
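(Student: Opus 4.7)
The plan is to diagonalize $A(x)$ in an orthonormal basis of eigenvectors and reduce every quantity appearing in the statement to an explicit scalar sum; the inequalities then become elementary estimates using only the bounds $minsp(A(x)) \leq |\lambda_i| \leq sp(A(x))$ on each eigenvalue. Concretely, I would fix $x$ with $\nabla f(x) \neq 0$ and note that $A(x) = \nabla^2 f(x) + \delta(x)\|\nabla f(x)\|^{1+\alpha} Id$ is real symmetric and invertible (the choice of $\delta(x)$ in the first While loop, justified by Lemma \ref{Lemma0}, forces $minsp(A(x)) \geq \kappa \|\nabla f(x)\|^{1+\alpha} > 0$). Pick an orthonormal eigenbasis $e_1, \ldots, e_m$ with nonzero real eigenvalues $\lambda_1, \ldots, \lambda_m$, and expand $\nabla f(x) = \sum_i a_i e_i$. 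One computes $v(x) = \sum_i (a_i/\lambda_i) e_i$; since the eigenspaces associated to positive and negative eigenvalues of a symmetric matrix are mutually orthogonal, the projections $pr_{A(x),\pm}$ act diagonally in this basis, so $w(x) = \sum_i (a_i/|\lambda_i|) e_i$.

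Parts 1) and 2) then follow by pointwise estimates on the coefficients. For 1), I would use
$$\|w(x)\|^2 = \sum_i a_i^2/\lambda_i^2,\qquad \|\nabla f(x)\|^2 = \sum_i a_i^2,$$
and bound $1/\lambda_i^2$ between $1/sp(A(x))^2$ and $1/minsp(A(x))^2$, then take square roots. For 2), the key identity is the manifestly nonnegative expression
$$\langle w(x),\nabla f(x)\rangle = \sum_i a_i^2/|\lambda_i|;$$
comparing it with $\|\nabla f(x)\|^2 = \sum_i a_i^2$ via $minsp(A(x)) \leq |\lambda_i| \leq sp(A(x))$ yields the first pair of inequalities, while comparing it with $\|w(x)\|^2 = \sum_i a_i^2/\lambda_i^2$ via the same bounds yields the second.

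For part 3), since part 2) already establishes $\langle w(x),\nabla f(x)\rangle \geq \|\nabla f(x)\|^2/sp(A(x)) > 0$, the standard $C^2$ Taylor expansion $f(x - \gamma w(x)) - f(x) = -\gamma \langle w(x),\nabla f(x)\rangle + O(\gamma^2)$ yields Armijo's inequality for all $\gamma$ sufficiently small, after absorbing the $O(\gamma^2)$ error into half of the linear term. There is no substantial obstacle in this lemma: everything reduces to linear algebra in the eigenbasis of $A(x)$ plus a one-line Taylor estimate, and the spectral lower bound $minsp(A(x)) > 0$ guaranteed by the first While loop is exactly what is needed for the estimates to be meaningful. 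The only point that deserves explicit verification is that $pr_{A(x),\pm}$ commute with the eigen-decomposition of $A(x)$, which is automatic from the symmetry of $A(x)$.
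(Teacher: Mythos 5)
Your proposal is correct and follows essentially the same route as the paper: diagonalize $A(x)$ in an orthonormal eigenbasis, express $v(x)$, $w(x)$, $\|\nabla f(x)\|^2$, $\|w(x)\|^2$ and $\langle w(x),\nabla f(x)\rangle$ as explicit sums in the coefficients $a_i$ and eigenvalues $\lambda_i$, bound each $|\lambda_i|$ between $minsp(A(x))$ and $sp(A(x))$, and finish part 3) with a second-order Taylor expansion at the fixed point $x$. The paper phrases the Taylor remainder in part 3) as $\gamma^2 O(\langle w(x),\nabla f(x)\rangle)$ via the equivalence $\|w(x)\|^2\sim \langle w(x),\nabla f(x)\rangle$ from part 2), whereas you absorb an $O(\gamma^2)$ term directly into half the linear term; for a single fixed $x$ these are the same estimate.
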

\begin{proof}
We denote by $e_1,\ldots ,e_m$ an orthonormal basis of eigenvectors of $A(x)$, and let $\lambda _i$ be the corresponding eigenvalue of $e_i$. If we write: 
$$\nabla f(x)=\sum _{i=1}^ma_ie_i=\sum _{\lambda _i>0}a_ie_i+\sum _{\lambda _i<0}a_ie_i,$$
then by definition
\begin{eqnarray*}
v(x)&=&\sum _{\lambda _i>0}a_ie_i/\lambda _i+\sum _{\lambda _i<0}a_ie_i/\lambda _i=\sum _{\lambda _i>0}a_ie_i/|\lambda _i|-\sum _{\lambda _i<0}a_ie_i/|\lambda _i|,\\
w(x)&=&\sum _{\lambda _i>0}a_ie_i/|\lambda _i|+\sum _{\lambda _i<0}a_ie_i/|\lambda _i|=\sum _{i=1}^ma_ie_i/|\lambda _i|. 
\end{eqnarray*}
 
Hence, by calculation 
\begin{eqnarray*}
<w(x),\nabla f(x)>&=&\sum _{i=1}^ma_i^2/|\lambda _i|,\\
||\nabla f(x)||^2&=&\sum _{i=1}^ma_i^2,\\
||w(x)||^2&=&\sum _{i=1}^ma_i^2/|\lambda _i|^2.\\
\end{eqnarray*}

From this, we immediately obtain 1) and  2).

3)  By Taylor's expansion, for $\gamma $ small enough: 
\begin{eqnarray*}
f(x-\gamma w(x))-f(x)&=&-\gamma <w(x),\nabla f(x)> +\frac{\gamma ^2}{2}<\nabla ^2f(x)w(x),w(x)>+o(||\gamma w(x)||^2)\\
&=&-\gamma <w(x),\nabla f(x)>+\gamma ^2O(<w(x),\nabla f(x)>).
\end{eqnarray*}
The last equality follows from 2), where we know that $||w(x)||^2\sim <w(x),\nabla f(x)>$. 
\end{proof}
  
The above lemma shows that, for $x$ with $\nabla f(x)\not= 0$,  the following quantity (Armijo's step-size) is a well-defined positive number: 
  
  $\gamma (x):=$ the largest number $\gamma $ among the sequence $\{1,0.5, 0.5^2,\ldots ,0.5^j,\ldots \}$ for which  
  \begin{eqnarray*}
  f(x-\gamma w(x))-f(x)\leq -\gamma <w(x),\nabla f(x)>/2. 
  \end{eqnarray*}
    
  The next lemma explores properties of this quantity.

  \begin{lemma} Let $\mathcal{B}\subset \mathbb{R}^m$ be a compact set so that $\epsilon =\inf_{x\in \mathcal{B}}||\nabla f(x)||>0$. Then

  1) There is $\tau >0$ for which: 
\begin{eqnarray*}
\sup _{x\in \mathcal{B}}sp(A(x))&\leq& \tau ,\\
\inf _{x\in \mathcal{B}}minsp(A(x))&\geq &1/\tau . 
\end{eqnarray*}

2) $\inf _{x\in \mathcal{B}}\gamma (x)> 0$.  

    \label{Lemma2}\end{lemma}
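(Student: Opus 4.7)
The plan is to derive part 1 from the uniform boundedness of the ingredients of $A(x)$, and then feed that into a Taylor expansion argument for part 2.

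For part 1, the key observation is that by Lemma \ref{Lemma0}, applied with $\epsilon=\|\nabla f(x)\|^{1+\alpha}$, the defining condition for $\delta(x)$ is always met by some index $j\in\{0,\dots,m\}$, so $\delta(x)\in\{\delta_0,\dots,\delta_m\}$. Hence $|\delta(x)|\leq D:=\max_{0\leq j\leq m}|\delta_j|$ uniformly on $\mathcal{B}$. Since $\mathcal{B}$ is compact and $f$ is $C^2$, both $\|\nabla f\|$ and $\|\nabla^2 f\|$ are bounded above on $\mathcal{B}$ by constants $G_0,H_0$. The spectral radius bound then follows from the triangle inequality applied to $A(x)=\nabla^2 f(x)+\delta(x)\|\nabla f(x)\|^{1+\alpha}\mathrm{Id}$, giving $sp(A(x))\leq H_0+DG_0^{1+\alpha}$. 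The lower bound on $minsp(A(x))$ is built into the defining condition: $minsp(A(x))\geq \kappa\|\nabla f(x)\|^{1+\alpha}\geq \kappa\epsilon^{1+\alpha}>0$, since $\|\nabla f(x)\|\geq \epsilon$ on $\mathcal{B}$. Taking $\tau$ to be the maximum of these two constants (and its reciprocal) finishes part 1.

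For part 2, I would first use part 1 together with Lemma \ref{Lemma1}(1) to conclude that $\|w(x)\|\leq \|\nabla f(x)\|/minsp(A(x))\leq G_0\tau$ uniformly for $x\in\mathcal{B}$. This bound shows that the segment $\{x-\gamma w(x):x\in\mathcal{B},\ \gamma\in[0,1]\}$ is contained in a fixed compact set $K$ (e.g.\ the closed $G_0\tau$-neighborhood of $\mathcal{B}$), on which $\|\nabla^2 f\|$ is bounded by some constant $M$ since $f$ is $C^2$. Taylor's theorem with the Lagrange form of the remainder therefore gives, for every $x\in\mathcal{B}$ and $\gamma\in[0,1]$,
\begin{eqnarray*}
f(x-\gamma w(x))-f(x)\leq -\gamma\langle w(x),\nabla f(x)\rangle+\tfrac{M\gamma^2}{2}\|w(x)\|^2.
\end{eqnarray*}

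Now I would invoke Lemma \ref{Lemma1}(2), which gives $\|w(x)\|^2\leq \tau\langle w(x),\nabla f(x)\rangle$ (using $minsp(A(x))\geq 1/\tau$). Substituting this into the previous inequality yields
\begin{eqnarray*}
f(x-\gamma w(x))-f(x)\leq -\gamma\langle w(x),\nabla f(x)\rangle\bigl(1-\tfrac{M\tau\gamma}{2}\bigr),
\end{eqnarray*}
so Armijo's condition $f(x-\gamma w(x))-f(x)\leq -\gamma\langle w(x),\nabla f(x)\rangle/2$ holds as soon as $\gamma\leq 1/(M\tau)$. Since $\gamma(x)$ is selected by halving starting from $1$, the largest accepted step size satisfies $\gamma(x)\geq \min\{1,\,1/(2M\tau)\}$, which is a positive constant independent of $x\in\mathcal{B}$, proving part 2. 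The only subtle point is ensuring the Hessian bound $M$ applies at the intermediate points of the Taylor remainder, which is handled by enlarging $\mathcal{B}$ to the compact set $K$ above using the uniform bound on $\|w(x)\|$; beyond this bookkeeping the argument is essentially automatic once parts of Lemma \ref{Lemma1} are in place.
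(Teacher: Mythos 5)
Your proof is correct and follows essentially the same approach as the paper: part 1 from compactness, the finiteness of the $\delta_j$ set, and the lower bound from the defining condition (via Lemma \ref{Lemma0}), and part 2 by feeding part 1 into Lemma \ref{Lemma1} and a uniform Taylor estimate. Your write-up of part 2 is in fact more complete than the paper's (which only sketches "using Taylor's expansion as in the proof of part 3) of Lemma \ref{Lemma1}"), and you correctly use the exponent $1+\alpha$ where the paper's displayed bound $\kappa\epsilon^{1+m}$ appears to contain a typo.
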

\begin{proof}

1) Since both $\nabla ^2f(x)$ and $\nabla f(x)$ have bounded norms on the compact set $\mathcal{B}$, and $\delta _j$ belongs to a finite set, it follows easily that $\sup _{x\in \mathcal{B}}sp(A(x))<\infty$. (For example, this is seen by using that for a real symmetric matrix $A$, we have $sp(A)=\max _{||v||=1}|<Av,v>| $.)

By Lemma \ref{Lemma0}, $\inf _{x\in \mathcal{B}}minsp(A(x))\geq \kappa \epsilon ^{1+m}$. 

2) From 1) and Lemma \ref{Lemma1},  there is $\tau >0$ so that for all $x\in \mathcal{B}$ we have
\begin{eqnarray*}
&&||\nabla f(x)||^2/\tau \leq <w(x),\nabla f(x)>\leq \tau ||\nabla f(x)||^2, \\
&& ||w(x)||^2/\tau \leq <w(x),\nabla f(x)>\leq \tau ||w(x)||^2.
\end{eqnarray*}  

From this, using Taylor's expansion as in the proof of part 3) of Lemma \ref{Lemma1} and the assumption that $f$ is $C^2$, it is easy to obtain the assertion that $\inf _{x\in \mathcal{B}}\gamma (x)> 0$. 

\end{proof}  

 Now we prove Theorem \ref{Theorem1}. 
 \begin{proof}[Proof of Theorem \ref{Theorem1}]

0) This is by construction. 

1) It suffices to show that if $\{x_{n_k}\}$ is a bounded subsequence of $\{x_n\}$, then $\lim _{n\rightarrow\infty}\nabla f(x_{n_k})$. If it is not the case, we can assume, by taking a further subsequence if needed, that $\inf _{k}||\nabla f(x_{n_k})||>0$. Then, by Lemma \ref{Lemma2}, upto some positive constants, $<\nabla f(x_{n_k}),w(x_{n_k})>$ $\sim$ $||\nabla f(x_{n_k})||^2$ $\sim$ $||w(x_{n_k})||^2$ for all $k$. Since $||\nabla f(x_{n_k})||$ is bounded, it follows that $w_{n_k}\sim \widehat{w_{n_k}}$.   

It follows from the fact that $\{f(x_n)\}$ is a decreasing sequence, that $\lim _{k\rightarrow\infty}(f(x_{n_k})-f(x_{n_{k+1}}))=0$. It follows from Armijo's condition that $\lim _{k\rightarrow\infty}\gamma _{n_k}<\nabla f(x_{n_k}),\widehat{w(x_{n_k})}>=0$. By Lemma \ref{Lemma2} we have $\inf _{k}\gamma _{n_k}>0$, and hence $\lim _{k\rightarrow\infty}<\nabla f(x_{n_k}),\widehat{w(x_{n_k})}>=0$.  From the equivalence in the previous paragraph about $w_{n_k}\sim \widehat{w_{n_k}}$ and $<\nabla f(x_{n_k}),\widehat{w(x_{n_k})}>\sim ||w_{n_k}||^2\sim ||\nabla f(x_{n_k})||^2$, it follows that $\lim _{k\rightarrow\infty}\nabla f(x_{n_k})=0$, a contradiction.

2)  Assume that $x_n$ converges to $x_{\infty}$ and $x_{\infty}$ is a saddle point. Then $\nabla ^2f(x_{\infty})$ is invertible while $\nabla f(x_{\infty})=0$. It follows that there is a small open neighbourhood $U$ of $x_{\infty}$ so that for all $x\in U$ we have $\delta (x)=\delta _0$. By shrinking $U$ if necessary, we also have that $||w(x)||\leq \epsilon$ for all $x\in U$, and hence $\widehat{w(x)}=w(x)$, for all $x\in U$. Here $\epsilon >0$ is a small positive number, to be determined later. 

By Taylor's expansion, and using $A(x)=\nabla ^2f(x)+\delta _0||\nabla f(x)||^{1+\alpha}Id=\nabla ^2f(x)+o(1)$, we have 
\begin{eqnarray*}
f(x-w(x))-f(x)&=&-<w(x),\nabla f(x)>+\frac{1}{2}<\nabla ^2f(x)w(x),w(x)>+o(||w(x)||^2)\\
&=&-<w(x),\nabla f(x)>+\frac{1}{2}<A(x)w(x),w(x)>+o(||w(x)||^2).
\end{eqnarray*}

We denote by $e_1,\ldots ,e_m$ an orthonormal basis of eigenvectors of $A(x)$, and let $\lambda _i$ be the corresponding eigenvalue of $e_i$. If we write: 
$$\nabla f(x)=\sum _{i=1}^ma_ie_i=\sum _{\lambda _i>0}a_ie_i+\sum _{\lambda _i<0}a_ie_i,$$
then by definition
\begin{eqnarray*}
v(x)&=&\sum _{\lambda _i>0}a_ie_i/\lambda _i+\sum _{\lambda _i<0}a_ie_i/\lambda _i=\sum _{\lambda _i>0}a_ie_i/|\lambda _i|-\sum _{\lambda _i<0}a_ie_i/|\lambda _i|,\\
w(x)&=&\sum _{\lambda _i>0}a_ie_i/|\lambda _i|+\sum _{\lambda _i<0}a_ie_i/|\lambda _i|=\sum _{i=1}^ma_ie_i/|\lambda _i|. 
\end{eqnarray*}

Then $A(x)w(x)=\sum _{\lambda _i>0}a_ie_i-\sum _{\lambda _i<0}a_ie_i$.  Therefore,
\begin{eqnarray*}
\frac{1}{2}<A(x)w(x),w(x)>&=&\frac{1}{2}<\sum _{\lambda _i>0}a_ie_i-\sum _{\lambda _i<0}a_ie_i,\sum _{i=1}^ma_ie_i/|\lambda _i|>\\
&=&\frac{1}{2}(\sum _{\lambda _i>0}a_i^2/|\lambda _i|-\sum _{\lambda _i<0}a_i^2/|\lambda _i|)\\
&\leq&\frac{1}{2}\sum _{i=1}^ma_i^2/|\lambda _i|\\
&=&\frac{1}{2}<w(x),\nabla f(x)>. 
\end{eqnarray*}

Combining all the above, we obtain: 
\begin{eqnarray*}
f(x-w(x))-f(x)\leq -\frac{1}{2}<w(x),\nabla f(x)>+o(||w(x)||^2),
\end{eqnarray*}
for all $x\in U$. By Lemma \ref{Lemma1}, for $x\in U$ we have $<w(x),\nabla f(x)>\sim ||w(x)||^2$. Hence, $\gamma (x)=1$ for all $x\in U$. 

This means that $x_{n+1}=x_n-w_n$ for $n$ large enough, that is, New Q-Newton's method Backtracking becomes New Q-Newton's method. Then the results in \cite{truong-etal, truongnew} finish the proof that $x_{\infty}$ cannot be a saddle point. 

3), 4) and 5): follow easily from the corresponding parts in \cite{truongnew} and 
\end{proof}
 
For the proof of Theorem \ref{Theorem2}, we recall that there is so-called real projective space $\mathbb{P}^m$, which is a compact metric space and which contains $\mathbb{R}^m$ as a {\bf topological} subspace. If $x,y\in \mathbb{R}^m$ and $d(.,.)$ is the metric on   $\mathbb{P}^m$, then 
\begin{eqnarray*}
d(x,y):=\arccos (\frac{1+<x,y>}{\sqrt{1+||x||^2}\sqrt{1+||y||^2}}). 
\end{eqnarray*} 
It is known that there is a constant $C>0$ so that for $x,y\in \mathbb{R}^m$ we have $d(x,y)\leq C||x-y||$, see e.g. \cite{truong-nguyen2}. Real projective spaces were used in \cite{truong-nguyen1, truong-nguyen2} to establish good convergence guarantees for Backtracking GD and modifications. 

\begin{proof}[Proof of Theorem \ref{Theorem2}]

We show first that $\lim _{n\rightarrow\infty}d(x_n,x_{n+1})=0$, where $d(.,.)$ is the projective metric defined above.  It suffices to show that each subsequence $\{x_{n_k}\}$ has another subsequence for which the needed claim holds. By taking a further subsequence if necessary, we can assume that either $\lim _{k\rightarrow\infty}||x_{n_k}||=\infty$ or $\lim _{k\rightarrow\infty}x_{n_k}=x_{\infty}$ exists. 

The first case:  $\lim _{k\rightarrow\infty}||x_{n_k}||=\infty$. In this case, since $x_{n_{k}+1}=x_{n_k}-\gamma _{n_k}\widehat{w_{n_k}}$, where both $\gamma _{n_k}$ and $\widehat{w_{n_k}}$ are bounded, it follows easily from the definition of the projective metric that $\lim _{k\rightarrow \infty}d(x_{n_k+1},x_{n_k})=0$. 

The second case: $\lim _{k\rightarrow\infty}x_{n_k}=x_{\infty}$ exists. In this case, by part 1) of Theorem \ref{Theorem1}, $x_{\infty}$ is a critical point of $f$.  Since $f$ is Morse by assumption, we have that $\nabla ^2f(x_{\infty})$ is invertible. Then, by the proof of part 2) of Theorem \ref{Theorem1}, we have that $\lim _{k\rightarrow\infty}\widehat{w_{n_k}}=0$. Then, since $\gamma _{n_k}$ is bounded, one obtain that
\begin{eqnarray*}
\lim _{k\rightarrow\infty}||x_{n_k+1}-x_{n_k}||=\lim _{k\rightarrow\infty}||\gamma _{n_k}\widehat{w_{n_k}}||=0. 
\end{eqnarray*}
Thus by the inequality between the projective metric and the usual Euclidean norm, one obtains $\lim _{k\rightarrow\infty}d(x_{n_k+1},x_{n_k})=0$. 

With this claim proven, one can proceed as in \cite{truong-nguyen2}, using part 1) of Theorem \ref{Theorem1} and the fact that the set of critical points of a Morse function is at most countable, to obtain a bifurcation: 

either

i) $\lim _{n\rightarrow\infty}||x_n||=\infty$

or 

ii) $\lim _{n\rightarrow\infty}x_n=x_{\infty}$ exists, and $x_{\infty}$ is a critical point of $f$. 

If $f$ has compact sublevels, then since $f(x_n)\leq f(x_0)$ for all $n$, only case ii) can happen. 

In case ii): By parts 2) and 3)  of Theorem \ref{Theorem1}, $x_{\infty}$ is a local minimum and the rate of convergence is quadratic. 
 \end{proof} 
\subsection{Variants} We present some variants in this subsection. 

First we observe that the $\widehat{w_k}$ in the definition of New Q-Newton's method is bounded. This could make slow convergence in certain cases, see experiments in the next section. We note that the attraction of Newton's method lies in that when it converges then it converges fast, and this is achieved by using $(\nabla ^{2}f)^{-1}\nabla f$ (when definable), which can be unbounded even in compact sets.  This suggests that we can use directly the direction $w_k$ in the line search in the definition of New Q-Newton's method Backtracking. This way, we obtain a variant named New Q-Newton's method Backtracking S, see Algorithm \ref{table:alg0}. 

\medskip
{\color{blue}
 \begin{algorithm}[H]
\SetAlgoLined
\KwResult{Find a critical point of $f:\mathbb{R}^m\rightarrow \mathbb{R}$}
Given: $\{\delta_0,\delta_1,\ldots, \delta_{m}\} \subset \mathbb{R}$\ (chosen {\bf randomly}, and $\alpha >0$;
Initialization: $x_0\in \mathbb{R}^m$\;
$\kappa:=\frac{1}{2}\min _{i\not=j}|\delta _i-\delta _j|$;\\
 \For{$k=0,1,2\ldots$}{ 
    $j=0$\\
  \If{$\|\nabla f(x_k)\|\neq 0$}{
   \While{$minsp(\nabla^2f(x_k)+\delta_j \|\nabla f(x_k)\|^{1+\alpha}Id)<\kappa  \|\nabla f(x_k)\|^{1+\alpha}$}{$j=j+1$}}

 $A_k:=\nabla^2f(x_k)+\delta_j \|\nabla f(x_k)\|^{1+\alpha}Id$\\
$v_k:=A_k^{-1}\nabla f(x_k)=pr_{A_k,+}(v_k)+pr_{A_k,-}(v_k)$\\
$w_k:=pr_{A_k,+}(v_k)-pr_{A_k,-}(v_k)$\\
$\gamma :=1$\\
 \If{$\|\nabla f(x_k)\|\neq 0$}{
   \While{$f(x_k-\gamma {w_k})-f(x_k)>-\gamma <{w_k},\nabla f(x_k)>/2$}{$\gamma =\gamma /2$}}

$x_{k+1}:=x_k-\gamma {w_k}$
   }
  \caption{New Q-Newton's method Backtracking S} \label{table:alg0}
\end{algorithm}
}
\medskip

New Q-Newton's method Backtracking S almost satisfies the same theoretical guarantees as New Q-Newton's method Backtracking. Indeed, we have the following.

\begin{theorem}

Theorem \ref{Theorem1} holds if we replace  New Q-Newton's method Backtracking by New Q-Newton's method Backtracking S. 

Theorem \ref{Theorem2} holds if we replace  New Q-Newton's method Backtracking by New Q-Newton's method Backtracking S, and replace the phrase  "either $ \lim _{n\rightarrow\infty}||x_n||=\infty$" by   "either $ \limsup _{n\rightarrow\infty}||x_n||=\infty$". 

\label{TheoremS}\end{theorem}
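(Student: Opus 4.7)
The plan is to revisit the proofs of Theorems \ref{Theorem1} and \ref{Theorem2} and isolate exactly where the normalization $\widehat{w}=w/\max\{1,\|w\|\}$ is used. My guiding observation is that $\widehat{w}$ enters the previous proofs essentially only to guarantee $\|x_{n+1}-x_n\|\leq 1$ \emph{globally}; the remaining ingredients (Lemmas \ref{Lemma0}--\ref{Lemma2}, Armijo's inequality, the Taylor-expansion estimate for $\langle A(x)w(x),w(x)\rangle$, and the local results from \cite{truongnew}) are already phrased in terms of $w$ itself. Near a non-degenerate critical point $x_\infty$ one has $\|w(x)\|\to 0$, so the S version and the original method agree in a neighborhood of $x_\infty$; on any compact set on which $\|\nabla f\|$ is bounded below, Lemma \ref{Lemma2} gives $\|w(x)\|$ both bounded above and bounded below by constant multiples of $\|\nabla f(x)\|$.

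To handle Theorem \ref{Theorem1} for the S variant, I would copy the original proofs of parts 0), 3), 4), 5) essentially verbatim: part 0) is immediate from Armijo's condition (now applied directly to $w$), and parts 3)--5) are local near $x_\infty$, where $w=\widehat{w}$, so the S update coincides with the update of the original algorithm. For part 1) I would argue by contradiction: if $\{x_{n_k}\}$ is bounded and $\inf_k\|\nabla f(x_{n_k})\|\geq\varepsilon>0$, apply Lemma \ref{Lemma2} to the closure of that subsequence to obtain uniform bounds $\|w(x)\|^2\sim\langle w(x),\nabla f(x)\rangle\sim\|\nabla f(x)\|^2$ and $\gamma(x)\geq\gamma_0>0$. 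Combined with descent and $f$ bounded below on the relevant compact set, this forces $\gamma_{n_k}\langle w_{n_k},\nabla f(x_{n_k})\rangle\to 0$, contradicting the uniform estimate. For part 2) I would repeat the saddle-point argument: in a shrinking neighborhood of a purported saddle limit $x_\infty$ one has $\delta(x)=\delta_0$ and $\|w(x)\|$ arbitrarily small, so the Taylor computation still yields $\gamma(x)=1$ there, reducing the dynamics to New Q-Newton's method near $x_\infty$, to which the saddle-avoidance of \cite{truong-etal, truongnew} applies unchanged.

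For the Morse statement I would keep the high-level structure of the proof of Theorem \ref{Theorem2}: from each subsequence $\{x_{n_k}\}$ I pass to a further subsequence which is either bounded (with limit $x_\infty$) or satisfies $\|x_{n_k}\|\to\infty$. In the bounded case I apply Theorem \ref{Theorem1} (parts 1, 2, 5) to identify $x_\infty$ as a local minimum and then use local quadratic convergence to conclude that once $x_{n_k}$ enters a sufficiently small neighborhood of $x_\infty$ the entire tail of $\{x_n\}$ converges to $x_\infty$. The main obstacle, and the reason the statement must weaken $\lim\|x_n\|=\infty$ to $\limsup\|x_n\|=\infty$, is the projective-metric step: in the original proof one used $\|\widehat{w_{n_k}}\|\leq 1$ to conclude $d(x_{n_k+1},x_{n_k})\to 0$ in $\mathbb{P}^m$ even along subsequences escaping to infinity, whereas for the S variant $\|w_{n_k}\|$ may grow without control when $\|\nabla f\|$ is small yet $\|x_{n_k}\|$ is large. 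Consequently the cluster-set-is-connected argument from \cite{truong-nguyen2} cannot in general promote \emph{some} escaping subsequence into the whole sequence escaping, and the best alternative to convergence that one can state is $\limsup\|x_n\|=\infty$. The compact-sublevels case is unchanged: the descent property traps $\{x_n\}$ inside a sublevel of $f$, excluding the unbounded alternative, so the sequence converges to a local minimum with quadratic rate.
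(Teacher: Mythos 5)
Your proposal is correct and takes essentially the same approach as the paper. Both center on the same two observations: on a compact set where $\inf\|\nabla f\|>0$, Lemma \ref{Lemma2} makes $w$ and $\widehat{w}$ comparable, so the argument for cluster points being critical transfers verbatim; and when a subsequence escapes to infinity, the lack of a uniform bound on $\|w_{n_k}\|$ (unlike $\|\widehat{w_{n_k}}\|\leq 1$) breaks the projective-metric estimate $d(x_{n_k+1},x_{n_k})\to 0$, so the bifurcation argument from \cite{truong-nguyen2} cannot upgrade the escaping subsequence to the whole sequence, which is exactly why the conclusion is weakened to $\limsup\|x_n\|=\infty$.
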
  
\begin{proof}
Concerning Theorem \ref{Theorem1}: The most crucial part is to show that if $\{x_{n_k}\}$ converges to $x_{\infty}$, then $x_{\infty}$ is a critical point of $f$. To this end, we note that the proof of part 0) of Theorem \ref{Theorem1} uses that when $\mathcal{B}$ is a compact set so that all $\inf _{x\in \mathcal{B}}||\nabla f(x)||>0$, then $w(x)$ and $\widehat{w(x)}$ are equivalence for all $x\in \mathcal{B}$. Hence, when we replace $\widehat{w_k}$ in the proof by $w_k$, we obtain the same conclusions.   
 
 Concerning Theorem \ref{Theorem2}: The most crucial part is to show that $\lim _{n\rightarrow\infty}d(x_{n},x_{n+1})=0$. We need $\widehat{w_k}$ bounded to deal with the case when $x_{n_k}$ diverges to $\infty$. For the case  $\{x_{n_k}\}$ converges to a point $x_{\infty}$, we use that $x_{\infty}$ is a non-degenerate critical point to obtain again the  equivalence between $w_{n_k}$ and $\widehat{w_{n_k}}$, and hence obtain the same conclusions when in the proofs replacing $\widehat{w_k}$ by $w_k$. Thus, if we replace "either $x_n$ diverges to $\infty$" by   "$ either \limsup _{n\rightarrow\infty}||x_n||=\infty$", then we can replace $\widehat{w_k}$ by $w_k$ and obtain the same conclusions. 

\end{proof}

While New Q-Newton's method Backtracking and New Q-Newton's method Backtracking S are descriptive enough to be straight forwardly implemented in (Python) codes, the first While loop in their definitions may be time consuming to check. The purpose of that While loop is to assure that part 1) of Lemma \ref{Lemma2} is satisfied. Indeed, we only need part 1) of Lemma \ref{Lemma2} for bounded subsequences of the sequence $\{x_n\}$. Even if we do not have $minsp (A+\delta \epsilon Id)\geq \kappa \epsilon $ (where $\epsilon =||\nabla f(x)||^{1+\alpha}$ and $A=\nabla ^2f(x)$) uniformly across the sequence, it may still happen that part 1) of Lemma \ref{Lemma2} holds for bounded subsequences of the sequence $\{x_n\}$ if we choose $\delta _j$'s randomly. Hence, by replacing it with the less demanding first While loop in the definition of New Q-Newton's method, we obtain the variant New Q-Newton's method Backtracking V2 below. One can argue further, that since the step direction $\widehat{w_k}$ already has some good properties, it might be not necessary to do the whole Armijo's search, but a less demanding line search $f(x_k-\delta \widehat{w_k})-f(x_k)\leq 0$. This way, one obtains the variant New Q-Newton's method Backtracking V1 below. If, like what done in the case of New Q-Newton's method Backtracking S, in the line search component we replace $\widehat{w_k}$ by $w_k$, then we obtain the variants New Q-Newton's method Backtracking V3 and New Q-Newton's method Backtracking V4.

\medskip
{\color{blue}
 \begin{algorithm}[H]
\SetAlgoLined
\KwResult{Find a critical point of $f:\mathbb{R}^m\rightarrow \mathbb{R}$}
Given: $\{\delta_0,\delta_1,\ldots, \delta_{m}\}\subset \mathbb{R}$\ (chosen {\bf randomly}) and $\alpha >0$;
Initialization: $x_0\in \mathbb{R}^m$\;
 \For{$k=0,1,2\ldots$}{ 
    $j=0$\\
    \If{$\|\nabla f(x_k)\|\neq 0$}{
   \While{$\det(\nabla^2f(x_k)+\delta_j \|\nabla f(x_k)\|^{1+\alpha}Id)=0$}{$j=j+1$}}

$A_k:=\nabla^2f(x_k)+\delta_j \|\nabla f(x_k)\|^{1+\alpha}Id$\\
$v_k:=A_k^{-1}\nabla f(x_k)=pr_{A_k,+}(v_k)+pr_{A_k,-}(v_k)$\\
$w_k:=pr_{A_k,+}(v_k)-pr_{A_k,-}(v_k)$\\
$\widehat{w_k}:={w_k}/\max\{1,||w_k||\}$\\
$\gamma :=1$\\
 \If{$\|\nabla f(x_k)\|\neq 0$}{
   \While{$f(x_k-\gamma \widehat{w_k})-f(x_k)>0$}{$\gamma =\gamma /2$}}

$x_{k+1}:=x_k-\gamma \widehat{w_k}$
   }
  \caption{New Q-Newton's method Backtracking V1} \label{table:alg1}
\end{algorithm}
}

\medskip
{\color{blue}
 \begin{algorithm}[H]
\SetAlgoLined
\KwResult{Find a critical point of $f:\mathbb{R}^m\rightarrow \mathbb{R}$}
Given: $\{\delta_0,\delta_1,\ldots, \delta_{m}\} \subset \mathbb{R}$\ (chosen {\bf randomly}) and $\alpha >0$;
Initialization: $x_0\in \mathbb{R}^m$\;
 \For{$k=0,1,2\ldots$}{ 
    $j=0$\\
    \If{$\|\nabla f(x_k)\|\neq 0$}{
   \While{$\det(\nabla^2f(x_k)+\delta_j \|\nabla f(x_k)\|^{1+\alpha}Id)=0$}{$j=j+1$}}

$A_k:=\nabla^2f(x_k)+\delta_j \|\nabla f(x_k)\|^{1+\alpha}Id$\\
$v_k:=A_k^{-1}\nabla f(x_k)=pr_{A_k,+}(v_k)+pr_{A_k,-}(v_k)$\\
$w_k:=pr_{A_k,+}(v_k)-pr_{A_k,-}(v_k)$\\
$\widehat{w_k}:=w_k/\max \{1,||w_k||\}$\\
$\gamma :=1$\\
 \If{$\|\nabla f(x_k)\|\neq 0$}{
   \While{$f(x_k-\gamma \widehat{w_k})-f(x_k)>-\gamma <\widehat{w_k},\nabla f(x_k)>/2$}{$\gamma =\gamma /2$}}

$x_{k+1}:=x_k-\gamma \widehat{w_k}$
   }
  \caption{New Q-Newton's method Backtracking V2} \label{table:alg2}
\end{algorithm}
}
\medskip

\medskip
{\color{blue}
 \begin{algorithm}[H]
\SetAlgoLined
\KwResult{Find a critical point of $f:\mathbb{R}^m\rightarrow \mathbb{R}$}
Given: $\{\delta_0,\delta_1,\ldots, \delta_{m}\}\subset \mathbb{R}$\ (chosen {\bf randomly}) and $\alpha >0$;
Initialization: $x_0\in \mathbb{R}^m$\;
 \For{$k=0,1,2\ldots$}{ 
    $j=0$\\
    \If{$\|\nabla f(x_k)\|\neq 0$}{
   \While{$\det(\nabla^2f(x_k)+\delta_j \|\nabla f(x_k)\|^{1+\alpha}Id)=0$}{$j=j+1$}}

$A_k:=\nabla^2f(x_k)+\delta_j \|\nabla f(x_k)\|^{1+\alpha}Id$\\
$v_k:=A_k^{-1}\nabla f(x_k)=pr_{A_k,+}(v_k)+pr_{A_k,-}(v_k)$\\
$w_k:=pr_{A_k,+}(v_k)-pr_{A_k,-}(v_k)$\\
$\gamma :=1$\\
 \If{$\|\nabla f(x_k)\|\neq 0$}{
   \While{$f(x_k-\gamma {w_k})-f(x_k)>0$}{$\gamma =\gamma /2$}}

$x_{k+1}:=x_k-\gamma {w_k}$
   }
  \caption{New Q-Newton's method Backtracking V3} \label{table:alg3}
\end{algorithm}
}

\medskip
{\color{blue}
 \begin{algorithm}[H]
\SetAlgoLined
\KwResult{Find a critical point of $f:\mathbb{R}^m\rightarrow \mathbb{R}$}
Given: $\{\delta_0,\delta_1,\ldots, \delta_{m}\} \subset \mathbb{R}$\ (chosen {\bf randomly}) and $\alpha >0$;
Initialization: $x_0\in \mathbb{R}^m$\;
 \For{$k=0,1,2\ldots$}{ 
    $j=0$\\
    \If{$\|\nabla f(x_k)\|\neq 0$}{
   \While{$\det(\nabla^2f(x_k)+\delta_j \|\nabla f(x_k)\|^{1+\alpha}Id)=0$}{$j=j+1$}}

$A_k:=\nabla^2f(x_k)+\delta_j \|\nabla f(x_k)\|^{1+\alpha}Id$\\
$v_k:=A_k^{-1}\nabla f(x_k)=pr_{A_k,+}(v_k)+pr_{A_k,-}(v_k)$\\
$w_k:=pr_{A_k,+}(v_k)-pr_{A_k,-}(v_k)$\\
$\gamma :=1$\\
 \If{$\|\nabla f(x_k)\|\neq 0$}{
   \While{$f(x_k-\gamma {w_k})-f(x_k)>-\gamma <{w_k},\nabla f(x_k)>/2$}{$\gamma =\gamma /2$}}

$x_{k+1}:=x_k-\gamma {w_k}$
   }
  \caption{New Q-Newton's method Backtracking V4} \label{table:alg4}
\end{algorithm}
}
\medskip

\section{Experiments}
\subsection{Implementation details} Interested readers are referred to \cite{truong-etal} for a description on implementation details for New Q-Newton's method in Python, and to the GitHub link \cite{phuongGitHub} for Python codes for New Q-Newton's method. With that, it is easy to implement New Q-Newton's method Backtracking V1--4 by adding a few lines for checking Armijo's condition $f(x_n-\gamma \widehat{w_n})-f(x_n)$ $\leq$ $-\gamma <\widehat{w_n},\nabla f(x_n)>/2$ or the weaker descent property $f(x_n-\gamma \widehat{w_n})\leq f(x_n)$. Codes for New Q-Newton's method Backtracking V1--4 are available at the GitHub link \cite{tuyenGitHub}. 

Using the ideas from \cite{truong-nguyen1, truong-nguyen2}, one can also incorporate the Two-way Backtracking version in the line search component for faster performance, but to keep things clear in this paper we consider only the above simple line search versions.  Similar to New Q-Newton's method, one does not need precise values of the gradient and Hessian, but approximations are good enough. 

\subsection{Experimental results} In \cite{truong-etal}, we have tested New Q-Newton's method with various problems, including a toy model for protein, a stochastic version of Griewank's function, and various benchmark cost functions. While generally New Q-Newton's method has a very good performance in comparison to other methods, there are cases (in particular, for some non-smooth functions or some not good initial points), its performance is not so good. We have redone many of these experiments with New Q-Newton's method Backtracking (in particular the cases where New Q-Newton's method does not perform well), and found that indeed the performance is similar or improved. In the below, to keep the space reasonable, we will report only several cases where the improvement by New Q-Newton's method Backtracking is most significantly (either in terms of computing resources, running time or convergence to a better point).  

The experimental setting is as in \cite{truong-etal}. We use the python package numdifftools \cite{num} to compute approximately the gradients and Hessian of a real function, since symbolic computation is not quite efficient. All experiments are run on a small personal laptop. (Note: In \cite{truong-etal}, for the cases $N_n=500$ and $N_n=1000$ in Table \ref{tab:Griewank2} it needs  a stronger computer  with configuration Processor: Intel Core i9-9900X CPU @ 3.50GHzx20, Memory: 125.5 GiB. Here we can run on a usual personal laptop, and hence needs less computing resources.) The unit for running time is seconds.

Here, we will compare the performance of New Q-Newton's method Backtracking against New Q-Newton's method, the usual Newton's method, BFGS, Adaptive Cubic Regularization \cite{nesterov-polyak, cartis-etal}, as well as Random damping Newton's method \cite{sumi}  and Inertial Newton's method \cite{bolte-etal}.
 
For New Q-Newton's we choose $\alpha =1$ in Algorithm \ref{table:alg}. Moreover, we will choose $\Delta =\{0,\pm 1\}$, even though for theoretical proofs we need $\Delta$ to have at least $m+1$ elements, where $m=$ the number of variables. The justification is that when running New Q-Newton's method it almost never happens the case that both $\nabla ^2f(x)$ and $\nabla ^2f(x)\pm ||\nabla f(x)||^2Id$ are not invertible. These parameters are used for New Q-Newton's method Backtracking V1--4 as well. For BFGS: we use the function scipy.optimize.fmin$\_$bfgs available in Python, and put  $gtol=1e-10$ and $maxiter=1e+6$. For Adaptive cubic regularization for Newton's method, we use the AdaptiveCubicReg module in the implementation in \cite{ARCGitHub}. We use the default hyperparameters as recommended there, and use "exact" for the hessian$\_$update$\_$method.  For hyperparameters in Inertial Newton's method, we choose $\alpha =0.5$ and $\beta =0.1$ as recommended by the authors of \cite{bolte-etal}. 

We will also compare the performance to Unbounded Two-way Backtracking GD \cite{truong-nguyen1}. The hyperparameters for Backtracking GD are fixed through all experiments as follows: $\delta _0=1$, $\alpha =0.5$ and $\beta =0.7$. Recall that this means we have the following in Armijo's condition: $f(x-\beta ^m\delta _0x)-f(x)\leq -\alpha \beta ^m\delta _0||\nabla f(x)||^2$, where $m\in \mathbb{Z}_{\geq 0}$ depends on $x$. Here we recall the essence of Unbounded and Two-way variants of Backtracking GD, see \cite{truong-nguyen1} for more detail. In the Two-way version, one starts the search for learning rate $\delta _n$ - at the step n- not at $\delta _0$ but at $\delta _{n-1}$, and allows the possibility of increasing $\delta \mapsto \delta /\beta $, and not just decreasing $\delta \mapsto \delta \beta$ as in the standard version of Backtracking GD. In the Unbounded variant, one allows the upper bound for $\delta _n$ not as $\delta _0$ but as $\max\{\delta _0,\delta _0||\nabla f(x_n)||^{-\kappa}\}$ for some constant $0<\kappa <1$. In all the experiments here, we fix $\kappa =1/2$. The Two-way version helps to reduce the need to do function evaluations in checking Armijo's condition, while the Unbounded version helps to make large step sizes near degenerate critical points (e.g. the original point for the function $f(x,y)=x^4+y^4$) and hence also helps with quicker convergence. 

{\bf Legends:} We use the following abbreviations:  "ACR" for Adaptive cubic regularisation, "BFGS" for itself, "Newton" for Newton's method, "Iner" for Inertial Newton's method, "NewQ" for New Q-Newton's method, "V1" for New Q-Newton's method Backtracking V1, and similarly for "V2", "V3", "V4", and "Back" for Unbounded Two-way Backtracking GD.  

{\bf Features reported:} We will report on the number of iterations needed, the function value and the norm of the gradient at the last point, as well as the time needed to run. 

{\bf Remarks:} In most of experiments we will only use "V1" and "V2", but in the Stochastic Griewank's test function, which is very time consuming and resource demanding, we use also "V3" and "V4". 

\subsubsection{A toy model for protein folding} This problem is taken from \cite{shh}. Here is a brief description of the problem. The model has only two amino acids, called A and B, among 20 that occurs naturally. A molecule with n amino acids will be called an n-mer. The amino acids will be linked together and determined by the angles of bend $\theta _2,\ldots ,\theta _{n-1}\in [0,2\pi ]$. We specify the amino acids by boolean variables $\xi _1,\ldots ,\xi _n\in \{1,-1\}$, depending on whether the corresponding one is A or B. The intramolecular potential energy is given by: 
\begin{eqnarray*}
\Phi =\sum _{i=2}^{n-1}V_1(\theta _i)+\sum _{i=1}^{n-2} \sum_{j=i+2}^n V_2(r_{i,j},\xi _i,\xi _j).
\end{eqnarray*}
Here $V_1$ is the backbone bend potential and $V_2$ is the nonbonded interaction, given by:  

\begin{eqnarray*}
V_1(\theta _i)&=&\frac{1}{4}(1-\cos (\theta _i)),\\
r_{i,j}^2&=&[\sum _{k=i+1}^{j-1}\cos (\sum _{l=i+1}^k\theta _l)]^2+[\sum _{k=i+1}^{j-1}\sin (\sum _{l=i+1}^k\theta _l)]^2,\\
C(\xi _i,\xi _j)&=&\frac{1}{8}(1+\xi _i+\xi _j+5\xi _i\xi _j),\\
V_2(r_{i,j},\xi _i,\xi _j)&=&4(r_{i,j}^{-12}-C(\xi _i,\xi _j)r_{i,j}^{-6}). 
\end{eqnarray*}
Note that the value of $C(\xi _i,\xi _j)$ belongs to the finite set $\{1,0.5,-0.5\}$. 

It is explained in \cite{truong-etal} that Table 1 in \cite{shh} seems contain inaccurate results. We will report, see Table  the experimental result for the case of ABBBABABAB, with the initial point (randomly chosen)
\begin{eqnarray*}
&&(\theta _2,\theta _3,\theta _4, \theta _5,\theta _6,\theta _7, \theta _8)\\
&=&(-1.3335047,   2.76782837, -1.89518385,  2.52345111,\\
&&-0.33519698, -1.98794015,
  0.02088706, -1.09200044).  
\end{eqnarray*}
The function value at the initial point is $579425.218039767$. In this case, New Q-Newton's method performs poorly.  See Table \ref{tab:ProteinFolding10}. 

\begin{table}[htp]
\fontsize{11}{11}\selectfont
  \centering
  \begin{tabular}{|l|c|c|c|c|c|c|c|c|}
  \hline
&ACR&BFGS	& Newton   & NewQ  & V1&V2& Iner& Back\\
\hline
Iterations &80&76& 50 & 330 &36&36& 13&65 \\
\hline
$f$ &4e+19&19.587&9.7e+4 & 1e+3 &19.427 &19.427 &  6e+11&20.225 \\
\hline
$||\nabla f||$ &7e+11& 1e-7 & 2.4e+6 &5e-9 &5e-11& 5e-11& 0&2e-7\\
\hline
Time &4.781&3.850&14.823 & 100 & 14.665&14.393 & 0.481&21.996 \\
\hline

  \end{tabular}
   \caption{Performance of different optimization methods for the toy protein folding problem for the 10-mer ABBBABABAB at a randomly chosen initial point. The function value at the initial point is 579425.218039767. For Newton's method: we have to take a special care and reduce the number of iterations to 50.  }%
  \label{tab:ProteinFolding10}
\end{table}

\subsubsection{Stochastic optimization for Griewank's function} Griewank's function is a well known test function in global optimization. It has the form: 
\begin{eqnarray*}
f(x_1,\ldots ,x_m)=1+\frac{1}{4000}\sum _{i=1}^mx_i^2-\prod _{i=1}^m\cos (x_i/\sqrt{i}). 
\end{eqnarray*}
It has a unique global minimum at the point $(0,\ldots ,0)$, where the function value is 0. The special property of it is that, in terms of the dimension $m$,  it has exponentially many local minima. However, \cite{locatelli} explained that indeed when the dimension increases, it can become more and more easier to find the global minimum.

Here we consider the stochastic version of Griewank's function, following \cite{kk}. Unlike the deterministic case, it becomes increasingly difficult to optimise when the dimension increases. We briefly recall some generalities of stochastic optimization. One  considers a function $f(x,\xi )$, which besides a variable $x$, also depends on a random parameter $\xi $. One wants to optimize the expectation of $f(x,\xi )$: Find $\min _{x}F(x)$, where $F(x)=E(f(x,\xi ))$. 

Assume now that one has an optimization method $A$ to be used to the above problem. Since computing the expectation is unrealistic in general, what one can do is as follows: 

At step n: choose randomly $N_n$ parameters $\xi _{n,1},\ldots ,\xi _{n,N}$, where $N_n$ can depend on $n$ and is usually chosen to be large enough. Then one approximates $F(x)$ by 
\begin{eqnarray*}
F_n(x)=\frac{1}{N_n}\sum _{i=1}^{N_n}f(x,\xi _{n,i}). 
\end{eqnarray*}

This is close to the mini-batch practice in Deep Learning, with a main difference is that in Deep Learning one trains a DNN on a large but finite set of data, and at each step n (i.e. epoch n) decompose the training set randomly into mini-batches to be used. The common practice is to use mini-batches of the same size $N_n=N$ fixed from beginning. There are also experiments with varying/increasing the mini-batch sizes, however this can be time consuming while not archiving competing results. There are also necessary modifications (such as rescaling of learning rates) in the mini-batch practice to obtain good performance, however in the experiments below we do not impose this to keep things simple.   

In this subsection we perform experiments on the stochastic version of the Griewank test function considered in the previous subsection. This problem was considered in \cite{kk}, where the dimension of $x=(x_1,\ldots ,x_m)$ is $m=10$ and of $\xi$ is $1$ (with the normal distribution $N(1,\sigma ^2)$), and $f(x,\xi)$ has the form: 
\begin{eqnarray*}
f(x,\xi )=1+\frac{1}{4000}||\xi x||^2-\prod _{i=1}^m\cos (x_i \xi /\sqrt{i}). 
\end{eqnarray*}
At each step, \cite{kk} chooses $N_n$ varying in an interval $[N_{min},N_{max}]$ according to a complicated rule. Here, to keep things simple, we do not vary $N_n$ but fix it as a constant from beginning. Also, we do not perform experiments on BFGS and Adaptive Cubic Regularization in the stochastic setting, because the codes of these algorithms are either not available to us or too complicated and depend on too many hyperparameters (and the performance is very sensitive to these hyperparameters) to be succesfully changed for the stochastic setting. We note however that the BFGS was tested in \cite{kk}, and interested readers can consult Table 8 in that paper for more detail. 

The settings in \cite{kk} are as follows: the dimension is 10, the $\sigma$ is chosen between 2 values $\sqrt{0.1}$ (with $N_{max}=500$) and $\sqrt{1}$ (with $N_{max}=1000$). We will also use these parameters in the below, for ease of comparison. We note that in \cite{kk}, time performance is not reported but instead the number of function evaluations (about 1.8 million for $\sigma=\sqrt{0.1}$, and about 6.3 million for $\sigma=\sqrt{1}$). Also, only the norm of gradient was reported in \cite{kk}, in which case it is ranged from $0.005$ to $0.01$. Experimental results are presented in Table \ref{tab:Griewank2}. 

\begin{table}[htp]
\fontsize{10}{10}\selectfont
  \centering
  \begin{tabular}{|l|c|c|c|c|c|c|c|c|}
  \hline
 	& Newton   & NewQ  & V1& V2&V3&V4& Iner& Back\\
\hline
&\multicolumn{8}{c|}{$N_n=10$, $\sigma =\sqrt{0.1}$}\\
\hline

Iterations &1000  & 33 &1e+3&136&52&314& 14& 1000\\
\hline
$f$ &3.8e+18& 0 &0.900 &2e-13& 0&0&2.2e+31 &1.079 \\
\hline
$||\nabla f||$  &6.2e+7  & 0&0.039&2e-7 &0&4e-14&0 &0.008\\
\hline
Time & 587& 17.70&522 &71 &34.23&190& 0.667&612 \\

\hline
&\multicolumn{8}{c|}{$N_n=10$, $\sigma =\sqrt{1}$}\\
\hline

Iterations &1000  & 1000 &85&63&43&146& 13& 486\\
\hline
$f$ &5.3e+19& 1.7e+21 & 0& 0&0&0& 6.6e+29 &5.3e-14 \\
\hline
$||\nabla f||$  &2.9e+8  & 2.1e+9&0&7e-10&0&1e-12 &0 &1.1e-7\\
\hline
Time & 546& 503&44 & 33&25.84&85.81& 0.600&258 \\


\hline
&\multicolumn{8}{c|}{$N_n=100$, $\sigma =\sqrt{0.1}$}\\
\hline

Iterations &1000  & 1000 &1e+3&1e+3&12 &22&13& 1000\\
\hline
$f$ &2.5e+17& 2.3e+18 &0.941 &0.934 &0&0& 4.6e+29 &0.967 \\
\hline
$||\nabla f||$  &1.7e+7  & 5.2e+7&0.027&0.001 &6e-16&8e-15&0 &0.002\\
\hline
Time & 3.7e+3& 1e+4&1.9e+4 &4.1e+4 &56.71 &105&4.333&5.5e+3 \\
\hline

&\multicolumn{8}{c|}{$N_n=100$, $\sigma =\sqrt{1}$}\\
\hline

Iterations &1000 & 1000 &59&52&52&26& 13& 395\\
\hline
$f$ &6.3e+18& 7.8e+16 &0 & 3e-15&0 &1e-13&9.3e+29 &9.5e-13 \\
\hline
$||\nabla f||$  &1.1e+8  & 1.1e+8&8e-14&5e-8 &0&4e-7&0 &6.3e-7\\
\hline
Time & 3.6e+3& 6.8e+3&243 &215 &246&121& 5.019&3.7e+3 \\
\hline

&\multicolumn{8}{c|}{$N_n=500$, $\sigma =\sqrt{0.1}$}\\
\hline

Iterations &1000 & 14 &52&1e+3& 9&24&13& 1000\\
\hline
$f$ &8.6e+17& 0 & 0&0.976 &0 &4e-13&4.4e+29 &0.964 \\
\hline
$||\nabla f||$  &3.1e+7  & 3.7e-16&1e-16&0.002 &0&3e-7&0 &0.014\\
\hline
Time & 1.0e+4& 142.838&1.1e+3 &2.6e+4&202 &5.5e+3&12.221&1.1e+4 \\
\hline

&\multicolumn{8}{c|}{$N_n=500$, $\sigma =\sqrt{1}$}\\
\hline

Iterations &1000 & 1000 &57&44&10 &24&14& 361\\
\hline
$f$ &2.6e+18& 9.8e+18 & 0&2e-14 & 0&2e-13&3e+21 &6.6e-13 \\
\hline
$||\nabla f||$  &7.3e+7  & 1.3e+8&9e-17&1e-7 &6e-16&3e-7&0 &9.9e-7\\
\hline
Time & 9.9e+3& 9.6e+3&3e+3 &5e+3 &221 &724&12.324&3.7e+3 \\
\hline

&\multicolumn{8}{c|}{$N_n=1000$, $\sigma =\sqrt{0.1}$}\\
\hline

Iterations &1000 & 19 &50&//&9&//& 13& 1000\\
\hline
$f$ &2.1e+17& 0 & 0& //&0&//& 4.6e+29 &0.945 \\
\hline
$||\nabla f||$  &1.5e+7  & 1.6e-16&6e-17&//&1e-16&// &0 &0.003\\
\hline
Time & 20e+3& 365& 2.0e+3& //&373&//& 23.303&21e+3 \\
\hline

&\multicolumn{8}{c|}{$N_n=1000$, $\sigma =\sqrt{1}$}\\
\hline

Iterations &1000 & 1000 &43&46& 15&22&14& 347\\
\hline
$f$ &1.9e+20& 1.7e+18 &0&0 &0&1e-13& 3e+31 &2.5e-12 \\
\hline
$||\nabla f||$  &6.2e+8  & 5.9e+7&2e-16&3e-10 &0&3e-7&0 &1.0e-6\\
\hline
Time & 20e+3& 19e+3&1.7e+3&2e+3&644 &884&25&7.3e+3 \\
\hline

  \end{tabular}
   \caption{ Performance of different optimization methods for {\bf stochastic} Griewank function. Dimension =  $10$, the initial point  $(10,\ldots ,10)$. The function value of the deterministic Griewank test function at the initial point is $1.264$. Mini-batch size $N_n$ is fixed in every steps of each experiment. The results for V1--4 are run on a usual personal laptop. The results for other methods are taken from \cite{truong-etal}, where the cases $N_n=500$ and $1000$ have to be run on a stronger computer.  "//": the experiment does not yield results after long time.} %
  \label{tab:Griewank2}
\end{table}

\subsubsection{Some benchmark functions}\label{Section:BenchmarkFunctions} We consider several benchmark functions having many different behaviours. See Tables \ref{tab:BenchmarkFunctions} and \ref{tab:BenchmarkFunctions2}. 

We consider a function: $f_1(x)=|x|^{1+1/3}$ (non-smooth at 0). It is not smooth at $x=0$, which is its unique critical point and also its global minimum. The initial point is (randomly chosen) $x_0=1$, at which the function value is $1$.  

We consider the function $f_2(x)=x^3\sin (1/x)$ (non-smooth at 0), which has compact sublevels. It has countably many critical points, which converge to the singular point $0$. The initial point is (randomly chosen) $x_0=0.75134554$, at which the function value is $0.41200773$.  

We consider the function $f_3(x,y)=100(y-|x|)^2+|1-x|$ (not smooth at lines $x=0$ and $x=1$), taken from \cite{bolte-etal}. It has a unique global minimum at $(1,1)$. The initial point (randomly chosen) is $(-0.99998925, 2.00001188)$, at which the function value is $102.004$. The speciality of this function is that New Q-Newton's method seems to enter a "near" cycle $(0.4987,0.5012)$ $\mapsto $ $(1.00124,0.9987)$ $\mapsto$ $(0.4987,0.5012)$. One point in this cycle is close to the global minimum, but the other point is very far away. If we choose another initial point, we see the same behaviour for New Q-Newton's method. 

We consider the Ackley function
\begin{eqnarray*}
f_4(x_1,\ldots ,x_D)=-20*exp[-0.2*\sqrt{0.5\sum _{i=1}^Dx_i^2}]-exp[0.5*\sum _{i=1}^D\cos (2\pi x_i)]+e+20. 
\end{eqnarray*}
The global minimum is at the origin $(0,\ldots, 0)$. We choose $D=3$ and the (randomly chosen) initial point $(0.01,0.02,-0.07)$, at which the function value is $0.262$. 

We consider the Rastrigin function 
\begin{eqnarray*}
f_5(x_1,\ldots ,x_D)=A*D+\sum _{i=1}^D(x_i^2-A\cos (2\pi x_i)).
\end{eqnarray*}
The global minimum is at the origin $(0,\ldots ,0)$. We choose $A=10$, $D=4$. The (randomly chosen) initial point is $(-4.66266579,-2.69585675,-3.08589085,-2.25482451)$, at which the function value is 83.892.

We consider Beale's function 
\begin{eqnarray*}
f_6(x,y)=(1.5-x+xy)^2+(2.25-x-xy^2)^2+(2.625-x-xy^3)^2.
\end{eqnarray*}
The global minimum is at $(3,0.5)$. The (randomly chosen) initial point is $(-0.52012358, -1.28227229)$, at which the function value is 28.879. 

We consider Bukin function $\#6$: 
\begin{eqnarray*}
f_7(x,y)=100\sqrt{|y-0.01x^2}+|0.01x+10|,
\end{eqnarray*}
which is non-smooth on two curves $x=-10$ and $y=0.01x^2$. The global minimum is at $(-10,1)$, with function value 0. We will consider 2 (randomly chosen) initial points. Point 1: $(4:38848192,-3.47943683)$, at which the function value is $191.769$. Point 2: $(-9.7,0.7)$, closer to the global minimum, at which the function value is 49.084. 


We consider Schaffer function $\#2$:
\begin{eqnarray*}
f_8(x,y)=0.5+(\sin ^2(x^2-y^2)-0.5)/(1+0.001(x^2+y^2))^2.
\end{eqnarray*}
The  global minimum is $(0,0)$, with function value $0$. The initial point (randomly chosen) is $(-57.32135254,-17.85920667)$, at which the function value is 0.514. In this case, New Q-Newton's method diverges to infinity.

\begin{table}[htp]
\fontsize{11}{11}\selectfont
  \centering
  \begin{tabular}{|l|c|c|c|c|c|c|c|c|}
  \hline
&ACR&BFGS	& Newton   & NewQ  & V1&V2& Iner& Back\\
\hline
&\multicolumn{8}{c|}{f1}\\
\hline
Iterations &Error&34& 1e+4 &1e+4  &35&18&1e+4 & 11\\
\hline
$f$ &Error&2e-41&[nan] &[nan]  &8e-15 &2e-15 & 5e+15 &2e-15 \\
\hline
$||\nabla f||$ &Error&9e-11  &[nan]  &[nan] &0.004&0.003 &1e+4 &0.003\\
\hline
Time &Error&0.003&0.113 & 0.140&0.001 &0.006 &0.132 &0.005\\
\hline
&\multicolumn{8}{c|}{f2}\\
\hline
Iterations &3&4&6 &6 &8&5& 1948&13 \\
\hline
$f$ &-0.0118&-0.0118&-2e-7&-2e-7 &-0.0118 &-0.0118 & -0.0118 &-0.0118 \\
\hline
$||\nabla f||$ &9e-5&4e-10  &6e-17 &6e-17&8e-17& 5e-17& 2e-8&9e-10\\
\hline
Time &0.007&0.003&0.0002 & 0.0002&0.0004 &0.0002 &0.034 &0.016\\
\hline
&\multicolumn{8}{c|}{f3}\\
\hline
Iterations &0&5&Error & 1e+4&20&20& 457&172 \\
\hline
$f$ &102&0.685&Error& 0.501&0.006 &0.002 & 9e+65 & 0.0005\\
\hline
$||\nabla f||$ &282&0.707  &Error&1.577&0.707& 1.342&1 &0.712\\
\hline
Time &0.001&0.007&Error &2.549 & 0.008&0.008 &0.011&0.028\\
\hline
&\multicolumn{8}{c|}{f4}\\
\hline
Iterations &1e+4&17&9 &23 &24&23&16 &55 \\
\hline
$f$ &0.137&5e-13&2.572& 4.076&6e-11 & 4e-11&20.936  & 6e-11\\
\hline
$||\nabla f||$ &3.855&2e-11 &3e-12&3e-13&7e-10& 5e-10& 0&8e-10\\
\hline
Time &51.61&0.195& 0.167&0.431 &0.443 & 0.436&0.072&0.924\\
\hline
&\multicolumn{8}{c|}{f5}\\
\hline
Iterations &5&13&9 &9 &6&7& 14&12 \\
\hline
$f$ &46.762&46.762&138&58.702& 43.777&46.762 &2e+33  &46.762 \\
\hline
$||\nabla f||$ &5e-6&1e-11 &2e-11&7e-12&5e-9& 1e-8&0 &9e-7\\
\hline
Time &0.018&0.105&0.170 &0.183 &0.120 &0.138 &0.063&0.246\\
\hline
&\multicolumn{8}{c|}{f6}\\
\hline
Iterations &10&19&21 &172 &12&16&Error & 187\\
\hline
$f$ &4e-11&6e-24&14.203&7.312&0 & 0& Error&2e-18\\
\hline
$||\nabla f||$ &2e-5&1e-11&0&4e-16&6e-22& 6e-22&Error&1e-9\\
\hline
Time &0.023&0.073&0.149 &1.231&0.088& 0.118&Error&1.322\\
\hline

 \end{tabular}
   \caption{Performance of different optimization methods for some benchmark functions, with random initial points. For descriptions of the functions and the initial points, see Section \ref{Section:BenchmarkFunctions}.  See also Table \ref{tab:BenchmarkFunctions2}.}%
  \label{tab:BenchmarkFunctions}
\end{table}

\begin{table}[htp]
\fontsize{11}{11}\selectfont
  \centering
  \begin{tabular}{|l|c|c|c|c|c|c|c|c|}
  \hline
&ACR&BFGS	& Newton   & NewQ  & V1&V2& Iner& Back\\
\hline
&\multicolumn{8}{c|}{f7, Point 1}\\
\hline
Iterations &28&1&Error &1e+4 &45&10&14 & 6\\
\hline
$f$ &141.929&3.029&Error&191& 0.098&1.271 &3e+17 &2.413\\
\hline
$||\nabla f||$ &35.310&140&Error&26.092&0.008&1.617 &0&2.276\\
\hline
Time &0.022&0.142&Error &67.578&0.315&0.071 &0.024&0.062\\
\hline
&\multicolumn{8}{c|}{f7, Point 2}\\
\hline
Iterations &1e+4&3&Error &1e+4 &11&5&14 & 39\\
\hline
$f$ &28.247&1.038&Error&49.176&0.004 &3.378&8e+17 &0.003\\
\hline
$||\nabla f||$ &180&0.791&Error&101&0.005&1.777 &0&0.005\\
\hline
Time &15.578&0.138&Error &67.778&0.079&0.035&0.025&0.273\\
\hline
&\multicolumn{8}{c|}{f8}\\
\hline
Iterations &1124&6&25 &17 &1e+4&1e+4& 3&1e+4 \\
\hline
$f$ &0.001&0.475&0.5&0.500 &0.499 &0.476&0.5&0.476 \\
\hline
$||\nabla f||$ &0.002&0.177  & 6e-10&1e-11&1e-10& 7e-4& 0&0.001\\
\hline
Time &5.550&0.132&0.175 & 0.123&74.551 &74.091 & 0.008&69.433\\
\hline

 \end{tabular}
   \caption{Performance of different optimization methods for some benchmark functions, with random initial points. For descriptions of the functions and the initial points, see Section \ref{Section:BenchmarkFunctions}. See also Table \ref{tab:BenchmarkFunctions}.}%
  \label{tab:BenchmarkFunctions2}
\end{table}

\subsubsection{Behaviour near degenerate saddle points}\label{SectionSaddlePoints} Here, we explore the behaviour of different optimization methods near some typical {\bf degenerate} saddle points. Since in this case the numerical values can become very large or very small in just a few iterates, we restrict the maximum number of iterations to 50.  See Table \ref{tab:SaddlePoints}. 

We recall that a degenerate saddle point is a critical point, which is neither a local minimum nor a local maximum, at which the Hessian is not invertible. Indeed, for all experiments to be considered, the Hessian is 0 identically at the concerned degenerate saddle point. 

We consider the "monkey saddle": 

\begin{eqnarray*}
f_9(x,y)=x^3-3xy^2. 
\end{eqnarray*}
The point $(0,0)$ is a degenerate saddle point. The (randomly chosen) initial point near the saddle point is $(-0.0004322,   0.00093845)$, at which the function value is $1e-9$. 

We consider the function: 
\begin{eqnarray*}
f_{10}(x,y)=x^2y+y^2. 
\end{eqnarray*}
The point $(0,0)$ is a degenerate saddle point. The (randomly chosen) initial point near the saddle point is $(0.0007154,  0.00088668)$, at which the function value is $7e-7$. 

We consider a function of the type $\sum _{i,j=1}^Dq_{i,j}x_i^2x_j^2$, suggested in \cite{lee-simchowitz-jordan-recht} as a good class of test functions for algorithms aiming to avoid (degenerate) saddle points. It is simple to check that if $q_{i,i}<0$ for some i and $q_{j,j}>0$ for some $j$, then the origin $(0,\ldots ,0)$ is a degenerate saddle point. Hence, a randomly chosen such matrix will have a degenerate saddle point at the origin. For certainty, we choose $D=3$ and choose $\{q_{i,j}\}$ (randomly) as: 
 \[ \left( \begin{array}{ccc}
-6.53899332&-4.918748445&-1.884110645\\
-4.918748445&-8.26397796&2.280742435\\
-1.884110645&2.280742435&1.36728532\\
\end{array}\right) \]

The (randomly chosen) initial point is $(8.52766549e-05, -4.64890817e-04,  2.75958449e-04)$, for which the function value is $-3e-13$. 
 
We consider a parametrised version of the function $f_{10}:$
\begin{eqnarray*}
f_{12}(x,y,t)=(x^2y+y^2)t.
\end{eqnarray*}
The whole line $\{x=y=0\}$ consists of degenerate saddle points.  The (randomly chosen) initial point near the origin is $ (0.00040449,  0.00029101, -0.00029746)$, at which the function value is $-2e-11$.

\begin{table}[htp]
\fontsize{11}{11}\selectfont
  \centering
  \begin{tabular}{|l|c|c|c|c|c|c|c|c|}
  \hline
&ACR&BFGS	& Newton   & NewQ  & V1&V2& Iner& Back\\
\hline
&\multicolumn{8}{c|}{f9}\\
\hline
Iter. &2&36&24 &50&50&50& 50&34 \\
\hline
$f$ &-2e+7&-536.40&2e-31&-2e+15& -1e+4& -1e+4&-3e+147 &-1.5e+193\\
\hline
$||\nabla f||$ &2e+5&1e+5&1e-20&5e+10&1e+3& 1e+3&0&0\\
\hline
Time &0.013&0.479&0.189 &0.384&0.390&0.402 &0.141&0.335\\
\hline
&\multicolumn{8}{c|}{f10}\\
\hline
Iter. &4&2&32 &50 &50&50&50 &50 \\
\hline
$f$ &-2e+34&-6e+53&-2e-40&-1e+12& -6e+3&-6e+3&-2e+119 &-6e-14\\
\hline
$||\nabla f||$ &2e+23&0&8e-31&3e+8&741&741 &0&4e-10\\
\hline
Time &0.022&0.159&0.244 &0.387&0.386&0.391&0.141&0.491\\
\hline
&\multicolumn{8}{c|}{f11}\\
\hline
Iter. &3&1& 50&50 &50&50&50 & 23\\
\hline
$f$ &-7e+12&-5e+63&-1e-36&-3e+12 &-3e+5 &-3e+5&3e+100&-1e+70 \\
\hline
$||\nabla f||$ &3e+10&0  & 1e-26&1e+10&1e+5&1e+5 &0 &0\\
\hline
Time &1.328&0.171&0.632 &0.646 &0.646 &0.638 &0.186 &0.356\\
\hline
&\multicolumn{8}{c|}{f12}\\
\hline
Iter. &3&1&50 &50 &50&50&50 & 10\\
\hline
$f$ &1e+30&-8e+69&3e-48&-6e+15 &-5329 &-5329&-9e+158&-1e+139 \\
\hline
$||\nabla f||$ &1e+28& 0& 2e-31&7e+10&665& 665&0 &0\\
\hline
Time &0.018&0.202& 0.722&0.742 &0.730 &0.735 &0.182 &0.147\\
\hline

 \end{tabular}
   \caption{Performance of different optimization methods, with random initial points near degenerate saddle points. For descriptions of the functions and the initial points, see Section \ref{SectionSaddlePoints}. "Iter" means the number of iterations. In all examples, Newton's method is attracted to the saddle point. In all examples, Unbounded Two-way Backtracking GD escapes from the saddle point, even though for f10 it does so very slowly. For f12: While theoretically ACR has descent property, its implementation which is not the same as the theoretical version, does not have this property.}%
  \label{tab:SaddlePoints}
\end{table}

\section{Conclusions and Future outlook}

In this paper, based on the algorithm New Q-Newton's method in our previous joint work \cite{truong-etal}, we  define a new algorithm New Q-Newton's method Backtracking. By incorporating Armijo's condition into New Q-Newton's method, it achieves as far as we know the best theoretical guarantee among all iterative optimisation methods for Morse functions: with a random initial point, it assures convergence to a {\bf local minimum} and with a {\bf quadratic} rate of convergence. An interesting and useful open question is to explore the behaviour of New Q-Newton's method Backtracking near degenerate critical points. See some experiments in Section \ref{SectionSaddlePoints}. 

Some variants of New Q-Newton's method Backtracking are also introduced, and prove some similar theoretical results. All of them are descriptive enough to be straight forwardly implemented in Python. Experiments show that New Q-Newton's method Backtracking indeed improves the performance of New Q-Newton's method, in many cases significantly. 

Using ideas from \cite{truongnew}, New Q-Newton's method Backtracking can be extended to Riemannian manifolds settings. This can be used appropriately for constrained optimization. For example, experiments in Section 3 show that New Q-Newton's method Backtracking and variants work well for some non-smooth functions on $\mathbb{R}^m$, and hence it is interesting to explore its theoretical guarantees for functions which are not $C^3$ or even not differentiable. If the non-$C^3$ set $\mathcal{C}$ is small (for example, has Lebesgue measure 0), one can work with the Riemannian version of New Q-Newton's method Backtracking and variants on the complement $U=\mathbb{R}^m\backslash \mathcal{C}$, regarded as a Riemannian manifold.

Thus, both theoretically and experimentally, New Q-Newton's method Backtracking combines the best of two worlds: it has the good convergence guarantee of Backtracking line search, while has the fast convergence of Newton's method. 

Experiments show that usually the weaker descent property $f(x_{n+1})-f(x_n)\leq 0$ is faster than Armijo's condition to combine with New Q-Newton's method, while the two variants generally converge to the same points. The reason can be that both Armijo's condition and New Q-Newton's method have their own pros and cons, and sometimes it can be overdo when combining them together. In such cases, the weaker descent property may be "just enough" to prevent New Q-Newton's method from diverging to infinity, while Armijo's method is "overdo".  (On the other hand, experiments in both small scale and Deep Neural Networks show that the weaker descent property combined with the usual Gradient Descent method can perform very much poorly than Backtracking GD.)

We also find that while using the rescaled $\widehat{w_k}=\frac{w_k}{\max \{1,||w_k||\}}$, which is a bounded vector, allows one to prove slightly better theoretical guarantees, there are cases where using the original step direction $w_k$ of New Q-Newton's method in Backtracking line search performs much better in practice. The reason could be the same as in the previous paragraph, that is $w_k$ is generally already good, and so one should not reduce its size too much. 

In the stochastic Griewank function experiment, it is interesting to note that it seems that methods employed Backtracking line search (New Q-Newton's method Backtracking  V1--4 and Unbounded Two-way Backtracking GD) behave better when the variance $\sigma$ increases. Hence, they could be suitably used for real life data, when one has no control on the variance.  Newton's method and New Q-Newton's method in contrast behave worst when $\sigma$ increases. 

Hence New Q-Newton's method Backtracking is a good second order method candidate to use in large scale optimization such as Deep Neural Networks. While Backtracking line search is not too time consuming (in particular if one uses the Two-way Backtracking version, see \cite{truong-nguyen1, truong-nguyen2}), current techniques for computing the Hessian of the function and its eigenvectors and eigenvalues (approximately) is both time consuming and resource demanding and hence is an obstruction to effectively use New Q-Newton's method (Backtracking) and variants in large scale. Research on this front is very demanding in both personnel and computing resources, but is worthy to pursue given the good theoretical guarantees and the observed experiments in small scale, and hence awaits to be explored in the future.

\end{document}